\newtheorem{theorem}{Theorem}
\newtheorem{cor}{Corollary}
\newtheorem{prop}{Proposition}
\newtheorem{con}{Conjecture}
\newtheorem{lem}{Lemma}
\theoremstyle{definition}
\newtheorem*{rem}{Remark}
\title{On power sums of matrices over a finite commutative ring}
\author{P. Fortuny }
\address{Departamento de Matem\'{a}ticas, Universidad de Oviedo\\ Avda. Calvo Sotelo, s/n, 33007 Oviedo, Spain}
\email{fortunypedro@uniovi.es}
\author{J.M. Grau}
\address{Departamento de Matem\'{a}ticas, Universidad de Oviedo\\ Avda. Calvo Sotelo, s/n, 33007 Oviedo, Spain}
\email{grau@uniovi.es}
\author{A.M. Oller-Marc\'{e}n}
\address{Centro Universitario de la Defensa de zaragoza\\ Ctra. Huesca s/n, 50090 Zaragoza, Spain}
\email{oller@unizar.es}
\author{I.F. R\'{u}a}
\address{Departamento de Matem\'{a}ticas, Universidad de Oviedo\\ Avda. Calvo Sotelo, s/n, 33007 Oviedo, Spain}
\email{rua@uniovi.es}
\begin{document}
\maketitle

\begin{abstract}
In this paper we deal with the problem of computing the sum of the $k$-th powers of all the elements of the matrix ring $\mathbb{M}_d(R)$ with $d>1$ and $R$ a finite commutative ring. We completely solve the problem in the case $R=\mathbb{Z}/n\mathbb{Z}$ and give some results that compute the value of this sum if $R$ is an arbitrary finite commutative ring $R$ for many values of $k$ and $d$. Finally, based on computational evidence and using some technical results proved in the paper we conjecture that the sum of the $k$-th powers of all the elements of the matrix ring $\mathbb{M}_d(R)$ is always $0$ unless $d=2$, $\textrm{card}(R) \equiv 2 \pmod 4$, $1<k\equiv -1,0,1 \pmod 6$ and the only element $e\in R \setminus \{0\}$ such that $2e =0$ is idempotent, in which case the sum is $\textrm{diag}(e,e)$.
\end{abstract}

\section{introduction}

For a ring $R$ we denote by $\mathbb{M}_d(R)$ the ring of $d\times d$ matrices over $R$. Now, given an integer $k\geq 1$ we define the sum
$$S_k^d(R):= \sum_{M\in \mathbb{M}_d(R)} M^k.$$
This paper deals with the computation of $S_k^d(R)$ in the case when $R$ is finite and commutative.

When $d=1$, the problem of computing $S_k^1(R)$ is completely solved only for some particular families of finite commutative rings. If $R$ is a finite field $\mathbb{F}_q$, the value of $S_k^1(\mathbb{F}_q)$ is well-known. If $R=\mathbb{Z}/n\mathbb{Z}$ the study of $S_k^1(\mathbb{Z}/n\mathbb{Z})$ dates back to 1840 \cite{VON} and has been completed in various works \cite{CAR,GMO,MOR2}. Finally, the case $R=\mathbb{Z}/n\mathbb{Z}[i]$ has been recently solved in \cite{for}. For those rings, we have the following result.

\begin{theorem}
Let $k\geq 1$ be an integer.
\begin{itemize}
\item[i)] Finite fields:
$$S_k^1(\mathbb{F}_q)=\begin{cases}
-1  , & \textrm{if $(q-1) \mid k $ };\\ 0  , &\textrm{otherwise}.
\end{cases}$$
\item[ii)] Integers modulo $n$:
$$S_k^1(\mathbb{Z}/n\mathbb{Z})=
\begin{cases}  \displaystyle{-\sum_{p \mid n, p-1 \mid k} \frac{n }{p }
    },
  & \textrm{if $k $ is even or $k=1$ or $n \not \equiv 0\pmod{4}$};\\
   \displaystyle{0
    }, & \textrm{otherwise}.
\end{cases}
$$
\item[iii)] Gaussian integers modulo $n$:
$$
S_k^1(\mathbb{Z}/n\mathbb{Z}[i])=
\begin{cases}  \frac{n}{2}(1+i),
  & \textrm{if $k>1$ is odd and $n\equiv 2\pmod{4}$};\\
  \displaystyle{-\sum_{p\in\mathcal{P}(k,n)} \frac{n^2}{p^2}}, & \textrm{otherwise}.
\end{cases}
$$
where $$
  \mathcal{P}(k,n):=\{ \textrm{prime $p$} : p \mid \mid n, p^2-1\mid k,
  p\equiv 3\pmod{4}\}
  $$
  and $p\mid\mid n$ means that $p\mid n$, but $p^2\nmid n$.
\end{itemize}
\end{theorem}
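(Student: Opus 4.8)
The plan is to treat the three parts separately, reducing parts (ii) and (iii) to prime-power computations via the Chinese Remainder Theorem (CRT). Part (i) is immediate: since $0^k=0$ for $k\geq 1$ we have $S_k^1(\mathbb{F}_q)=\sum_{u\in\mathbb{F}_q^\times}u^k$, and writing $\mathbb{F}_q^\times=\langle g\rangle$ (cyclic of order $q-1$) this equals $\sum_{j=0}^{q-2}(g^k)^j$; if $(q-1)\mid k$ then $g^k=1$ and the sum is $(q-1)\cdot 1=-1$, while otherwise $g^k-1$ is invertible and multiplying the geometric sum by it telescopes to $g^{(q-1)k}-1=0$.

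For part (ii), write $n=\prod_i p_i^{e_i}$; under the CRT isomorphism $\mathbb{Z}/n\mathbb{Z}\cong\prod_i\mathbb{Z}/p_i^{e_i}\mathbb{Z}$ the $p_i$-component of $S_k^1(\mathbb{Z}/n\mathbb{Z})$ is $\tfrac{n}{p_i^{e_i}}\,T(p_i^{e_i},k)$, where $T(p^e,k):=\sum_{a\in\mathbb{Z}/p^e\mathbb{Z}}a^k$. I would compute $T(p^e,k)$ by induction on $e$, splitting the sum into the units and the multiples of $p$: the multiples of $p$ contribute $p^k\,T(p^{e-1},k)$, and the units are controlled by the structure of $(\mathbb{Z}/p^e\mathbb{Z})^\times$ (cyclic for odd $p$; $\mathbb{Z}/2\mathbb{Z}\times\mathbb{Z}/2^{e-2}\mathbb{Z}$ for $p=2$, $e\geq 3$) together with the elementary observation that a subgroup $H\leq(\mathbb{Z}/p^e\mathbb{Z})^\times$ satisfies $\sum_{h\in H}h=0$ whenever $H\not\subseteq 1+p\mathbb{Z}/p^e\mathbb{Z}$ (pick $\zeta\in H$ with $\zeta-1$ a unit and telescope), the remaining case $H\subseteq 1+p\mathbb{Z}/p^e\mathbb{Z}$ being settled by a further geometric-series computation. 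The recursion then gives $T(p^e,k)\equiv-p^{e-1}\pmod{p^e}$ when $p$ is odd and $(p-1)\mid k$, or when $p=2$ and $k$ is even or $k=1$, and $T(p^e,k)\equiv 0$ in all other cases; in particular $T(2^e,k)=0$ for $e\geq 2$ and odd $k\geq 3$. Reassembling the components recovers the stated formula, the hypothesis ``$k$ even or $k=1$'' being forced precisely by the behaviour at $p=2$ when $4\mid n$, which is the delicate point here (the non-cyclic unit group and the anomalous vanishing for odd $k\geq 3$ must both be tracked exactly).

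For part (iii) we use $\mathbb{Z}/n\mathbb{Z}[i]=\mathbb{Z}[i]/(n)$ and, as $\mathbb{Z}[i]$ is a principal ideal domain, apply CRT over the factorization of $n$ into rational primes. The local factors $\mathbb{Z}[i]/(p^e)$ are: $(\mathbb{Z}/p^e\mathbb{Z})^2$ for $p\equiv 1\pmod 4$ (split); a local ring of cardinality $p^{2e}$ with residue field $\mathbb{F}_{p^2}$ for $p\equiv 3\pmod 4$ (inert); and a local ring of cardinality $2^{2e}$ with residue field $\mathbb{F}_2$ for $p=2$ (ramified, $(2)=(1+i)^2$). Split primes contribute nothing, since $S_k^1(R\times R)=(|R|\,S_k^1(R),|R|\,S_k^1(R))$ and $|R|=p^e$ annihilates everything modulo $p^e$. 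For an inert prime, separating units from the maximal ideal gives $p^k\,S_k^1(\mathbb{Z}[i]/(p^{e-1}))$ from the ideal and, by the same telescoping argument (the unit group modulo $1+\mathfrak{m}$ being $\mathbb{F}_{p^2}^\times$), $0$ from the units unless $(p^2-1)\mid k$; running the recursion shows $S_k^1(\mathbb{Z}[i]/(p^e))$ is nonzero only for $e=1$, where it equals $-1$ exactly when $(p^2-1)\mid k$, yielding the term $-\tfrac{n^2}{p^2}$ for each $p\in\mathcal{P}(k,n)$ after reassembly. The ramified prime $p=2$ is handled by direct computation in $\mathbb{Z}[i]/(2)=\{0,1,i,1+i\}$: one finds $S_k^1(\mathbb{Z}[i]/(2))=1+i$ for odd $k\geq 3$ and $0$ otherwise, and propagating this through CRT produces the exceptional value $\tfrac{n}{2}(1+i)$ precisely when $k>1$ is odd and $n\equiv 2\pmod 4$ (so that the $2$-part of $\mathbb{Z}[i]/(n)$ is $\mathbb{Z}[i]/(2)$ itself), all other local contributions vanishing in that situation.

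The main obstacle throughout is the prime $2$: in (ii) the non-cyclic group $(\mathbb{Z}/2^e\mathbb{Z})^\times$ and the vanishing of $T(2^e,k)$ for odd $k\geq 3$, and in (iii) the ramified local ring $\mathbb{Z}[i]/((1+i)^{2e})$, whose unit and element structure is subtler than in the odd-prime case and which is the sole source of the genuinely nonzero exceptional term $\tfrac{n}{2}(1+i)$. Everything else is routine bookkeeping with geometric sums and the CRT.
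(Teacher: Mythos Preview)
The paper does not prove this theorem. It is stated in the introduction as a summary of previously known results, with citations to \cite{VON,CAR,GMO,MOR2,for}, and no argument is offered; the paper's own contributions begin with the matrix case $d>1$. There is therefore no in-paper proof to compare yours against.

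Your sketch is the standard route and would succeed, but two points deserve care. First, a small slip in part~(ii): your dichotomy for $T(2^e,k)$ places the case $e=1$, $k$ odd $\ge 3$ in the ``$\equiv 0$'' bin, yet $T(2,k)=1\equiv -2^{0}\pmod 2$ for every $k\ge 1$; the vanishing for odd $k\ge 3$ holds only when $e\ge 2$. This matters when $n\equiv 2\pmod 4$, where the $2$-component must still contribute $-n/2$. Second, a genuine omission in part~(iii): you treat only $\mathbb{Z}[i]/(2)$ at the ramified prime and never show $S_k^1(\mathbb{Z}[i]/(2^e))=0$ for $e\ge 2$, which is needed whenever $4\mid n$ (the ``otherwise'' branch must then produce $0$ at the $2$-component). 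Your units-plus-ideal recursion is awkward here because the maximal ideal is $(1+i)$, not $(2)$. A cleaner device---parallel to the paper's Proposition~\ref{des}---is to write each element of $\mathbb{Z}[i]/(2^{e+1})$ as $A+2^eB$ with $A\in\mathbb{Z}[i]/(2^e)$, $B\in\mathbb{Z}[i]/(2)$, expand, and use $\sum_{B}B=0$ to obtain $S_k^1(\mathbb{Z}[i]/(2^{e+1}))\equiv 4\,S_k^1(\mathbb{Z}[i]/(2^e))\pmod{2^{e+1}}$; the vanishing for $e\ge 2$ then follows by induction. The same trick handles the inert primes with $e\ge 2$, where your ``further geometric-series computation'' is otherwise left vague.
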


On the other hand, if $d>1$ the problem has been only solved when $R$ is a finite field \cite{BCL}. In particular, the following result holds.

\begin{theorem}
Let $k,d\geq 1$ be integers. Then $S_k^d(\mathbb{F}_q)=0$ unless $q=2=d$ and $1<k\equiv -1,0,1 \pmod{6}$ in which case $S_k^d(\mathbb{F}_q)=I_2$.
\end{theorem}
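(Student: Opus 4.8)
The plan is to exploit symmetries of the finite set $\mathbb{M}_d(\mathbb{F}_q)$ that are compatible with $k$-th powers, and then to reduce the residual problem to counting closed walks; throughout I assume $d\ge2$ (the case $d=1$ is part (i) of the preceding theorem). First, conjugation: for $P\in\mathrm{GL}_d(\mathbb{F}_q)$ the map $M\mapsto PMP^{-1}$ is a bijection with $(PMP^{-1})^{k}=PM^{k}P^{-1}$, so $S_k^d(\mathbb{F}_q)$ commutes with every element of $\mathrm{GL}_d(\mathbb{F}_q)$; since the centraliser of $\mathrm{GL}_d(\mathbb{F}_q)$ in $\mathbb{M}_d(\mathbb{F}_q)$ is $\mathbb{F}_qI_d$, this forces $S_k^d(\mathbb{F}_q)=\lambda I_d$ for some $\lambda=\lambda(k,d,q)\in\mathbb{F}_q$ (in fact $\lambda\in\mathbb{F}_p$, by the entrywise Frobenius). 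Next, scaling: for $c\in\mathbb{F}_q^{\times}$, $M\mapsto cM$ is a bijection with $(cM)^{k}=c^{k}M^{k}$, so $(c^{k}-1)\lambda=0$; a generator $c$ of $\mathbb{F}_q^{\times}$ then gives $\lambda=0$ whenever $q-1\nmid k$. That disposes of all such cases (in particular everything with $q\ge k+2$), and it is vacuous when $q=2$; so the remaining cases are $q=2$ (any $k\ge1$) and $q\ge3$ with $q-1\mid k$, where $k\ge q-1\ge2$.

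For the remaining cases I would compute $\lambda=\bigl(S_k^d(\mathbb{F}_q)\bigr)_{11}=\sum_{M}(M^{k})_{11}$ combinatorially. Writing $(M^{k})_{11}=\sum M_{i_0i_1}M_{i_1i_2}\cdots M_{i_{k-1}i_{k}}$ as a sum over closed walks $1=i_0,i_1,\dots,i_{k}=1$ of length $k$ in the complete loop-digraph $\Gamma_d$ on vertices $\{1,\dots,d\}$, interchanging with the sum over $M\in\mathbb{M}_d(\mathbb{F}_q)\cong\mathbb{F}_q^{d^{2}}$ (one coordinate per entry), each monomial $\prod_{(a,b)}M_{ab}^{\,m_{ab}}$ yields $\prod_{(a,b)}\bigl(\sum_{x\in\mathbb{F}_q}x^{m_{ab}}\bigr)$. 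Since $\sum_{x\in\mathbb{F}_q}x^{m}=-1$ when $q-1\mid m$ and $m\ge1$, and $=0$ otherwise (including $m=0$, where it is $q\cdot1=0$ in $\mathbb{F}_q$), only walks traversing each of the $d^{2}$ directed edges of $\Gamma_d$ a positive multiple of $q-1$ times survive, each contributing $(-1)^{d^{2}}$; hence, writing $\mathcal{W}_{k,d,q}$ for the set of such walks,
\[
\lambda\equiv(-1)^{d^{2}}\lvert\mathcal{W}_{k,d,q}\rvert\pmod p,
\]
and in particular $\lambda=0$ whenever $k<d^{2}(q-1)$.

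To finish when $d\ge3$: the group $\mathrm{Sym}(\{2,\dots,d\})$ acts on $\mathcal{W}_{k,d,q}$ by permuting vertices, and the action is \emph{free}, because a walk fixed by a nontrivial permutation would avoid every vertex moved by it, contradicting the requirement that it use the edges incident to those vertices; therefore $(d-1)!\mid\lvert\mathcal{W}_{k,d,q}\rvert$, so $\lambda=0$ once $p\le d-1$. This settles $q=2$ for all $d\ge3$ and every $q\ge3$ with $d\ge p+1$, leaving only $q=2,\ d=2$ and the band $q\ge3$ (with $q-1\mid k$), $2\le d\le p$. The case $q=2,\ d=2$ is a finite check over the six conjugacy classes of $\mathbb{M}_2(\mathbb{F}_2)$ — $\{0\}$, the three rank-$1$ nilpotents, the six rank-$1$ idempotents, $\{I_2\}$, the three order-$2$ unipotents $\ne I_2$, and the two elements of order $3$ — whose $k$-th-power sums are, for $k\ge2$, respectively $0$, $0$, $I_2$, $I_2$, $I_2$ or $0$ according as $k$ is even or odd, and $I_2$ or $0$ according as $3\nmid k$ or $3\mid k$; these add to $I_2$ exactly when $k$ is even with $3\mid k$ or odd with $3\nmid k$, i.e.\ when $k\equiv-1,0,1\pmod6$, while $S_1^2(\mathbb{F}_2)=0$, which is the stated exception.

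The band $q\ge3$, $q-1\mid k$, $2\le d\le p$ is where I expect the real work: one must show $p\mid\lvert\mathcal{W}_{k,d,q}\rvert$, and now the prime-to-$p$ group $\mathrm{Sym}(\{2,\dots,d\})$ is useless. The right idea should be to use the divisibility $q-1\mid m_{ab}$ itself: each $w\in\mathcal{W}_{k,d,q}$ returns to vertex $1$ exactly $\sum_{b}m_{1b}(w)$ times — a positive multiple of $q-1$, hence $\ge d(q-1)\ge p$ — and splitting $w$ at these returns into first-return excursions carries a rotation action of the appropriate cyclic group, under which an orbit of size prime to $p$ forces $w$ to be a $p$-power concatenation of a strictly shorter walk in $\mathcal{W}_{k/p,d,q}$ (using that $q-1$ is a unit mod $p$); a descent on $k$ should then close it. Making that descent actually work, and separately handling $d=2$ (where only vertices $1,2$ are available), is the main obstacle. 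For $d=2$ an alternative is to evaluate, by a direct count of the closed walks split at their crossings between the two vertices,
\[
\lvert\mathcal{W}_{k,2,q}\rvert=\sum_{\substack{a,b,c\ge1\\(q-1)(a+b+2c)=k}}\binom{(q-1)(a+c)}{(q-1)a}\binom{(q-1)(b+c)-1}{(q-1)b},
\]
and extract a positive $p$-adic valuation via Kummer's theorem. Everything before this last band is essentially formal.
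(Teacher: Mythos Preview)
The paper does not itself prove this theorem; it quotes it as a known result of Brawley--Carlitz--Levine \cite{BCL}. The only fragment the paper reproves independently is the case $q=d=2$ (Proposition~\ref{nd2}), via the periodicity $M^{8}\equiv M^{2}$ in $\mathbb{M}_2(\mathbb{F}_2)$ and a finite check for $1\le k\le 7$. So there is no full ``paper's proof'' to compare against beyond that fragment, and what you do for $q=d=2$ is a perfectly good alternative to that fragment.

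Your reductions are correct and nicely organised: conjugation gives $S_k^d(\mathbb{F}_q)=\lambda I_d$; scaling forces $(q-1)\mid k$; the closed-walk expansion yields $\lambda\equiv(-1)^{d^{2}}\lvert\mathcal W_{k,d,q}\rvert\pmod p$; the free $\mathrm{Sym}(\{2,\dots,d\})$-action gives $(d-1)!\mid\lvert\mathcal W_{k,d,q}\rvert$ and hence $\lambda=0$ whenever $d\ge p+1$; and the explicit $q=d=2$ computation is fine. However, the proof is genuinely incomplete, as you yourself flag: the band $q\ge 3$, $(q-1)\mid k$, $2\le d\le p$ is not handled. For every odd prime power $q$ this band already contains $d=2$ and infinitely many $k$, so it is the heart of the theorem rather than a residual corner.

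Moreover, the descent you sketch for that band does not work as written. The number of first-return excursions is $r=\sum_{b} m_{1b}$, a positive multiple of $q-1$; but $q-1$ is coprime to $p$, so in the critical range $2\le d\le p$ the integer $r$ is typically coprime to $p$ as well (for instance when $q=p$, all $m_{1b}=p-1$, and $d<p$, one gets $r=d(p-1)$ with $\gcd(r,p)=1$). The cyclic rotation of excursions is then an action of a group of order prime to $p$, every orbit size is prime to $p$, and nothing forces $w$ to be a $p$-fold concatenation; the assertion that an orbit of size prime to $p$ places $w$ in $(\mathcal W_{k/p,d,q})^{p}$ is not justified and is false in general. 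The $d=2$ binomial formula you write down is also only an outline: extracting a positive $p$-adic valuation from that sum via Kummer is nontrivial and not carried out. In short, the missing band needs a different idea (as in \cite{BCL}); what you have is a partial proof, not a proof.
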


In this paper we deal with the computation of $S_k^d(R)$ with $d>1$ and $R$ a finite commutative ring. In particular Section \ref{pm} is devoted to completely determine the value of $S_k^d(R)$ in the case $R=\mathbb{Z}/n\mathbb{Z}$ (that we usually write as $\mathbb{Z}_n$). In Section \ref{ncm} we give some technical results regarding sums of non-commutative monomials over $\mathbb{Z}/n\mathbb{Z}$ which will be used in Section \ref{fcr} to compute $S_k^d(R)$ for an arbitrary finite commutative ring $R$ in many cases. Finally, we close the paper in Section \ref{cfw} with the following conjecture based on strong computational evidence

\begin{con} \label{conjf}Let $d>1$ and let $R$ be a finite commutative ring. Then $S_k^d(R)=0$ unless the following conditions hold:
\begin{enumerate}
\item $d=2$,
\item ${\rm card}(R) \equiv 2 \pmod{4}$ and $1<k\equiv -1,0,1 \pmod{6}$,
\item The unique element $e\in R \setminus \{0\}$ such that $2e =0$ is idempotent.
\end{enumerate}
Moreover, in this case
$$ S_k^d(R)=\begin{pmatrix} e & 0 \\ 0 & e \end{pmatrix}.$$
\end{con}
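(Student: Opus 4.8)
The plan is to collapse $S_k^d(R)$ to a single scalar, to express that scalar through the one-dimensional power sums $S_m^1(R)$, to reduce the statement to the cases already settled in the paper, and then to isolate what stays genuinely open. The first point is easy: over any commutative ring with $1$ and for any $d\ge2$ every transvection $I+ce_{ij}$ ($i\ne j$, $c\in R$) lies in $GL_d(R)$, with inverse $I-ce_{ij}$, and $S_k^d(R)$ is invariant under $M\mapsto UMU^{-1}$; since the centraliser of all transvections inside $\mathbb M_d(R)$ is, by an elementary computation, exactly the set of scalar matrices, one gets $S_k^d(R)=\lambda I_d$ with $\lambda=\bigl(S_k^d(R)\bigr)_{11}\in R$. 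So all off-diagonal entries vanish automatically, and the whole problem is to show that $\lambda=0$, except in the exceptional family, where $\lambda=e$.

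Next, since $R$ is commutative, $(M^k)_{11}=\sum_W\prod_{e\in W}m_e$, the sum over closed walks $W$ of length $k$ based at vertex $1$ in the complete digraph with loops on $\{1,\dots,d\}$; summing over $\mathbb M_d(R)$ coordinate by coordinate, the entries being independent, gives
\[
\lambda=\sum_{\substack{W\text{ closed at }1\\ \ell(W)=k}}|R|^{\,d^2-s(W)}\prod_{e\in\mathrm{supp}(W)}P_{\mu_e(W)},\qquad P_m:=S_m^1(R)=\sum_{x\in R}x^m,
\]
where $s(W)$ is the number of distinct edges of $W$ and $\mu_e(W)$ their multiplicities — exactly the kind of monomial sum treated in Section~\ref{ncm}. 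Thus $\lambda$ is a fixed non-negative integer combination of products of the quantities $P_m$, which are the only data coming from $R$.

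For the reduction, write $R\cong\prod_iR_i$ with each $R_i$ finite local; then $\mathbb M_d(R)\cong\prod_i\mathbb M_d(R_i)$ and the $i$-th component of $S_k^d(R)$ is $\bigl(\prod_{j\ne i}|R_j|^{d^2}\bigr)S_k^d(R_i)$. Since a finite local ring has prime-power order, $\mathrm{card}(R)\equiv2\pmod4$ forces exactly one factor to equal $\mathbb F_2$ and all others to have odd order — in which case condition~(3) of the conjecture is automatic — and the multiplicity $\prod_{j\ne i}|R_j|^{d^2}$ is odd on the $\mathbb F_2$-component and divisible by $2^{d^2}$ on any other even-order component. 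It follows that, granting the result for local rings, the conjecture is equivalent to: $S_k^d(R)=0$ for every finite local commutative ring $R\ne\mathbb F_2$ and every $d>1$; the exceptional value $\mathrm{diag}(e,e)$ then comes, via Theorem~2, from a single $\mathbb F_2$-factor. Two further moves on the local case: $M\mapsto-M$ gives $(-1)^k\lambda=\lambda$, so $\lambda=0$ whenever $k$ is odd and $|R|$ is odd (then $2\in R^\times$); and when $R=\mathbb Z/p^{\ell}$ the value of $S_k^d(R)$ is already given by Section~\ref{pm}. What is left is $\lambda=0$ for the remaining local $R$ with $k$ even, and for $|R|$ a power of $2$.

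For every local ring $R$ other than the $\mathbb Z/2^{\ell}$, the power sum $P_1=\sum_{x\in R}x$ vanishes — the additive group is of odd order or non-cyclic, so $R$ has no unique nonzero element of order $2$ — so only closed walks that traverse each edge at least twice contribute to $\lambda$; one then tries to kill the remaining (finite, once $d$ and $k$ are fixed) sum, for $d\ge3$ by a sign-reversing involution exploiting the extra vertices and for $d=2$ by reducing to a short explicit identity in $|R|,P_2,P_3,\dots$. An alternative is to iterate reduction modulo the powers of the maximal ideal, using $S_k^d(R)\equiv|\mathfrak m|^{d^2}S_k^d(R/\mathfrak m)\equiv0\pmod{\mathfrak m}$ when $R$ is not a field and then expanding $(M_0+M_1)^k$ with $M_1\in\mathbb M_d(\mathfrak m)$ level by level. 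The hard part — and the reason this is stated as a conjecture — is that this last step must be carried out \emph{uniformly in }$R$: $S_m^1(R)$ is known in closed form only for $\mathbb F_q$, $\mathbb Z/n\mathbb Z$ and $\mathbb Z/n\mathbb Z[i]$, so one is really asserting that the particular integer combinations of the $P_m$ forced by the matrix structure collapse to $0$ whatever those (unknown) values are; and the most resistant subcase is $d=2$ in residue characteristic $2$, precisely where the exceptional scalar $e$ lives and where neither the $M\mapsto-M$ symmetry nor an evident sign-reversing pairing is available.
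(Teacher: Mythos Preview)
This statement is a conjecture, and the paper does not prove it unconditionally either: Theorem~\ref{teorfin} restates it verbatim and derives it \emph{assuming} Conjectures~\ref{conj1} and~\ref{conj2}, which are left open. Your outline and the paper's argument therefore terminate at the same unbridged gap.

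The routes, however, are genuinely different. The paper never isolates the fact that $S_k^d(R)$ is a scalar matrix; instead it reduces via the $\mathbb{Z}_{p^s}$-module structure of $R$ (Propositions~\ref{free}--\ref{nf2}) to the monomial sums $S_w^d(p^{s_1},\dots,p^{s_r})$ of Section~\ref{ncm} and then \emph{postulates} their vanishing as Conjecture~\ref{conj1}, with Conjecture~\ref{conj2} handling the residual $d=p=2$, $s_i=1$ situation. Your conjugation-by-transvections argument gives the scalar form in one line, the closed-walk expansion of $\lambda$ reorganises everything around the one-dimensional sums $P_m=S_m^1(R)$, and your decomposition into local factors is finer than the paper's splitting by characteristic (Lemma~\ref{lemg}) and explains at once why conditions (1)--(3) occur together. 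These are real gains in clarity.

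One caveat: the paper does not assume $R$ is unital (see the remark following Theorem~\ref{teorfin}), whereas both your transvection argument and the product-of-local-rings structure theorem require a unit. In particular, your claim that condition~(3) is ``automatic'' once $\textrm{card}(R)\equiv 2\pmod 4$ fails for the order-$2$ ring with zero multiplication, a case the paper treats explicitly in the proof of Proposition~\ref{genp}. Beyond that, your proposed sign-reversing involution for $d\ge 3$ and ``short explicit identity'' for $d=2$ are no more concrete than Conjectures~\ref{conj1}--\ref{conj2}; you correctly locate the resistant subcase as $d=2$ in residue characteristic~$2$, and that is exactly where both approaches remain open.
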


\section{Power sums of matrices over $\mathbb{Z}_n$}
\label{pm}
In what follows we will consider integers $n,d>1$. For the sake of simplicity, $M_n^d$ will denote the set of integer matrices with entries in the range $\{0,\dots,n-1\}$. Furthermore, for an integer $k\ge 1$, let $S^d_k(n)=\sum_{M\in M_n^d}M^k$. Our main goal in this section will be to compute the value of $S_k^d(n)$ modulo $n$. This is exactly the sum $S_k^d(\mathbb{Z}/n\mathbb{Z})$.

We start with the prime case. If $n=p$ is a prime, we have the following result \cite[Corollary 3.2]{BCL}

\begin{prop}
\label{p} Let $p$ be a prime. Then, $S_k^d(p)\equiv 0\pmod{p}$ unless $d=p=2$.
\end{prop}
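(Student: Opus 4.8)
The plan is to reduce the computation of $S_k^d(p) = \sum_{M \in \mathbb{M}_d(\mathbb{F}_p)} M^k$ to a sum indexed by conjugacy classes, exploiting the fact that the power map is conjugation-equivariant: if $N = PMP^{-1}$ then $N^k = PM^kP^{-1}$. First I would group the matrices into $\mathrm{GL}_d(\mathbb{F}_p)$-conjugacy orbits. For an orbit $\mathcal{O}$ of a matrix $M$, the partial sum $\sum_{N \in \mathcal{O}} N^k = \sum_{P \in \mathrm{GL}_d/C(M)} P M^k P^{-1}$, where $C(M)$ is the centralizer. This inner sum commutes with every element of $\mathrm{GL}_d(\mathbb{F}_p)$ (reindexing the sum by left multiplication), hence by Schur's lemma / the fact that the centralizer of all of $\mathrm{GL}_d$ in $\mathbb{M}_d$ is the scalars, each orbit contributes a scalar matrix $\lambda_{\mathcal{O}} I_d$. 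Therefore $S_k^d(p) = cI_d$ for some scalar $c \in \mathbb{F}_p$, and it remains to identify $c$, e.g. by taking traces: $d \cdot c = \mathrm{tr}\, S_k^d(p) = \sum_{M} \mathrm{tr}(M^k)$.

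Next I would compute $\sum_{M \in \mathbb{M}_d(\mathbb{F}_p)} \mathrm{tr}(M^k)$. Writing $M^k$ entrywise, $\mathrm{tr}(M^k) = \sum_{i_1,\dots,i_k} m_{i_1 i_2} m_{i_2 i_3} \cdots m_{i_k i_1}$, so the total sum is $\sum_{i_1,\dots,i_k} \sum_{M} m_{i_1 i_2}\cdots m_{i_k i_1}$. For a fixed index cycle $(i_1,\dots,i_k)$, the monomial only involves the variables $m_{i_j i_{j+1}}$; grouping repeated variables, the sum over all $M$ factors as a product over the distinct entries appearing, where an entry appearing with multiplicity $a$ contributes $\sum_{x \in \mathbb{F}_p} x^a$ and every absent entry contributes the factor $p$ (free choice). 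Using part (i) of the first theorem in the excerpt, $\sum_{x \in \mathbb{F}_p} x^a \equiv -1 \pmod p$ when $(p-1) \mid a$ (and $a > 0$) and $\equiv 0$ otherwise; in particular the whole product vanishes mod $p$ as soon as some appearing entry has $0 < a \not\equiv 0 \pmod{p-1}$, and it also vanishes whenever at least one variable is absent and some appearing entry has exponent not divisible by $p-1$ — more carefully, the surviving terms are exactly those index-cycles for which \emph{every} entry $m_{rs}$ that appears does so with multiplicity divisible by $p-1$, and then the contribution is $(-1)^{\#\{\text{distinct entries}\}} p^{\#\{\text{absent entries}\}}$, which is $\equiv 0 \pmod p$ unless no entry is absent, i.e. all $d^2$ entries appear. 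Since the cycle has length $k$ and uses at most $k$ (with multiplicity, exactly $k$) entry-slots, for all $d^2$ entries to appear we need $k \ge d^2$; and for \emph{each} of the $d^2$ entries to appear with multiplicity a positive multiple of $p-1$, we need $k \ge d^2(p-1)$. A closer look at which closed walks of length $k$ on the complete digraph (with loops) on $d$ vertices visit every one of the $d^2$ edges with multiplicity divisible by $p-1$ will show this is impossible for $d \ge 3$, and also impossible for $d = 2$ when $p \ge 3$, by a counting/parity argument on edge multiplicities around vertices.

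The crux — and the step I expect to be the main obstacle — is this combinatorial count of closed walks: showing that for $(d,p) \ne (2,2)$ there is no length-$k$ closed walk on the loop-augmented complete digraph $K_d^{\circ}$ in which every edge is traversed a number of times divisible by $p-1$ (equivalently, that the corresponding $\mathbb{F}_p$-coefficient vanishes). For $p = 2$ this asks that every edge be used an even number of times at least twice — wait, divisible by $1$, hence no constraint, so one must instead track the sign $(-1)^{\#\text{distinct entries}} = (-1)^{d^2}$ and count closed walks of length $k$ covering all edges, which is where the $d = 2$ exception and the $k \equiv -1,0,1 \pmod 6$ phenomenon enter; for $p \ge 3$ one shows the divisibility-by-$(p-1)$ condition forces $k \ge d^2(p-1)$ but a flow/degree argument at each vertex (in-traversals equal out-traversals) combined with connectivity of the used subgraph yields a contradiction, or forces the walk to decompose in a way that makes the count a multiple of $p$. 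Once the combinatorics is settled, the conclusion $S_k^d(p) \equiv 0 \pmod p$ for $(d,p) \ne (2,2)$ follows immediately from $c \equiv d^{-1}\,\mathrm{tr}\,S_k^d(p) \equiv 0$ when $p \nmid d$, and from a direct inspection of the (then manifestly scalar) sum when $p \mid d$. Alternatively, and more in the spirit of the cited reference, one may simply invoke \cite[Corollary 3.2]{BCL} — since $\mathbb{Z}/p\mathbb{Z} = \mathbb{F}_p$ is a finite field, the statement is the $q = p$ specialization of the second theorem quoted above, reading off that $S_k^d(p) = 0$ unless $q = p = 2 = d$.
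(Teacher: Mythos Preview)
The paper does not prove this proposition; it simply records it as \cite[Corollary 3.2]{BCL} (equivalently, the $q=p$ case of Theorem~2 quoted in the introduction). Your closing remark --- ``one may simply invoke \cite[Corollary 3.2]{BCL}'' --- is therefore exactly the paper's treatment, and is correct.

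Your main line of attack, however, has a genuine gap at the combinatorial step. You assert that for $(d,p)\neq(2,2)$ there is no closed walk of length $k$ on the looped complete digraph on $d$ vertices traversing every edge a positive multiple of $p-1$ times. This is false: for $d=2$, $p=3$, $k=8$, the walk
\[
1\to 1\to 2\to 2\to 1\to 1\to 2\to 2\to 1
\]
uses each of the four edges exactly $p-1=2$ times. So what must actually be shown is that the \emph{number} of such walks (summed over starting vertices, with the $(-1)^{d^2}$ weight) is divisible by $p$, and you have not done this; your later hedge ``or forces the walk to decompose in a way that makes the count a multiple of $p$'' acknowledges as much without supplying the argument. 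The case $p\mid d$ is also only gestured at: there the trace identity $dc\equiv 0\pmod p$ carries no information about $c$, so ``direct inspection of the (then manifestly scalar) sum'' is not a proof. The scalarity reduction via conjugation is clean and correct, but evaluating the scalar is precisely where the substance of \cite{BCL} lies, and your sketch does not reach it.
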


Thus, the case $n=2$ must be studied separately. In fact, we have

\begin{prop}
\label{nd2}
$$S_k^2(2)\equiv\begin{cases}
0_2 \pmod{2}, & \textrm{if $k=1$ or $k\equiv 2,3,4\pmod{6}$};\\ I_2 \pmod{2}, &\textrm{if $1<k\equiv 0,1,5\pmod{6}$}.
\end{cases}$$
\end{prop}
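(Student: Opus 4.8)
The plan is to compute the sum directly in the $16$-element ring $\mathbb{M}_2(\mathbb{Z}_2)$, splitting $\mathbb{M}_2(\mathbb{Z}_2)$ into its singular part $\{M:\det M\equiv 0\pmod 2\}$ and its invertible part $GL_2(\mathbb{Z}_2)$ and handling the two separately. (For $1<k\equiv 0,1,5\pmod 6$ the value $I_2$ can also be read off from the quoted finite-field result with $q=d=2$, since $-1\equiv 5\pmod 6$; but it is just as quick to argue everything by hand.)

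For a singular matrix $M$, Cayley--Hamilton gives $M^2\equiv\operatorname{tr}(M)\,M\pmod 2$, and since $\operatorname{tr}(M)\in\{0,1\}$ this yields $M^k\equiv\operatorname{tr}(M)\,M\pmod 2$ for every $k\ge 2$. Hence for $k\ge 2$ the singular contribution is
$$\sum_{\det M\equiv 0}M^k\equiv\sum_{\substack{\det M\equiv 0\\ \operatorname{tr}M\equiv 1}}M\pmod 2,$$
a sum over the six rank-one idempotents, which a short enumeration shows equals $I_2$. For $k=1$ one computes instead $\sum_{\det M\equiv 0}M\equiv 0_2\pmod 2$ (the three nonzero matrices $M$ with $M^2\equiv 0$ add up to $I_2$, the six idempotents add up to $I_2$, and $I_2+I_2\equiv 0_2$).

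For the invertible part, $GL_2(\mathbb{Z}_2)\cong S_3$ has exponent $6$, so $g\mapsto g^k$ depends only on $k\bmod 6$; moreover $\sum_{g\in GL_2(\mathbb{Z}_2)}g^k$ is unchanged under conjugation, hence lies in the centre $\mathbb{Z}_2\,I_2$, so it equals $\mu_k I_2$ with $\mu_k\in\{0,1\}$. One reads off $\mu_k$ from the group structure: the identity together with the three involutions contribute $0_2$ when $k$ is even and $I_2$ when $k$ is odd (their sum being $0_2$), while the two elements of order $3$ are permuted among themselves when $3\nmid k$ and collapse to $I_2$ when $3\mid k$ (their sum being $I_2$). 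Combining the two sub-cases gives $\mu_k\equiv 1$ exactly for $k\equiv 2,3,4\pmod 6$ and $\mu_k\equiv 0$ exactly for $k\equiv 0,1,5\pmod 6$.

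Adding the two contributions finishes the proof: for $k\ge 2$ one gets $S_k^2(2)\equiv(1+\mu_k)I_2\pmod 2$, which is $0_2$ for $k\equiv 2,3,4\pmod 6$ and $I_2$ for $k\equiv 0,1,5\pmod 6$, while for $k=1$ both pieces vanish and $S_1^2(2)\equiv 0_2$; this is exactly the stated dichotomy. There is no genuine obstacle here: the whole content is the two elementary observations (the Cayley--Hamilton collapse $M^k\equiv\operatorname{tr}(M)\,M$ for singular $M$, so that the power only matters through $k=1$ versus $k\ge 2$, and the period-$6$ behaviour of $GL_2(\mathbb{Z}_2)$). The only thing requiring care is organizing the exceptional case $k=1$ and bookkeeping the six residue classes modulo $6$ consistently between the two parts.
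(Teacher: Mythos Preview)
Your argument is correct. The paper's own proof is much terser and takes a different route: it simply observes that $M^{2}\equiv M^{8}\pmod 2$ for every $M\in M_{2}(\mathbb{Z}_{2})$, so $S_{k}^{2}(2)$ is periodic of period $6$ for $k\geq 2$, and then checks the seven cases $k=1,\dots,7$ by brute force. Your approach instead splits $\mathbb{M}_{2}(\mathbb{Z}_{2})$ into its singular and invertible parts and treats each structurally: Cayley--Hamilton collapses the singular contribution to a fixed sum for all $k\geq 2$, while the isomorphism $GL_{2}(\mathbb{Z}_{2})\cong S_{3}$ (exponent $6$) handles the invertible part via centrality and the parity/order-$3$ case split. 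The paper's method is quicker to write down (periodicity plus a finite check), while yours explains \emph{why} the period is $6$ and where the residue classes $\{2,3,4\}$ versus $\{0,1,5\}$ come from, at the price of a bit more bookkeeping.
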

\begin{proof}
For every $M\in M_n^2$ it holds that $M^2\equiv M^8\pmod{2}$. As a consequence $S_k^2(2)\equiv S_{k+6}^2(2)\pmod{2}$ for every $k>1$. Thus, the result follows just computing $S_k^2(2)$ for $1\leq k\leq 7$.
\end{proof}

Now, we turn to the prime power case. The following lemma is straightforward

\begin{lem}\label{desc}
Let $p$ be a prime. Then, any element $M$ in $M^d_{p^{s+1}}$ can be uniquely written in the form $A+p^sB$, where $A\in M^d_{p^s},B\in M^d_p$.
\end{lem}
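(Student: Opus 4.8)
The plan is to establish the bijection entrywise. A matrix $M\in M^d_{p^{s+1}}$ is, by definition, a $d\times d$ array whose entries lie in $\{0,\dots,p^{s+1}-1\}$. For a single such integer $m$, Euclidean division by $p^s$ gives a unique pair $(a,b)$ with $0\le a<p^s$ and $0\le b$, namely $m=a+p^sb$; since $0\le m<p^{s+1}=p^s\cdot p$, the quotient $b$ satisfies $0\le b<p$. Conversely, any pair $(a,b)$ with $0\le a<p^s$ and $0\le b<p$ produces $m=a+p^sb$ in the range $\{0,\dots,p^{s+1}-1\}$, and distinct pairs produce distinct $m$ (if $a+p^sb=a'+p^sb'$ then $a\equiv a'\pmod{p^s}$ forces $a=a'$ and hence $b=b'$). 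Thus the map $(a,b)\mapsto a+p^sb$ is a bijection between $\{0,\dots,p^s-1\}\times\{0,\dots,p-1\}$ and $\{0,\dots,p^{s+1}-1\}$.

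Applying this coordinatewise: given $M\in M^d_{p^{s+1}}$, define $A\in M^d_{p^s}$ by $A_{ij}:=(M_{ij}\bmod p^s)$ and $B\in M^d_p$ by $B_{ij}:=\lfloor M_{ij}/p^s\rfloor$. Then $M=A+p^sB$, and the entrywise uniqueness just established shows that this decomposition is the only one with $A\in M^d_{p^s}$ and $B\in M^d_p$. This proves the lemma.

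I expect no real obstacle here; the statement is purely a reformulation of base-$p^s$ (equivalently, mixed-radix) digit extraction applied to each matrix entry, and the only point requiring a word of care is checking that the ``carry'' term $B$ genuinely has entries in $\{0,\dots,p-1\}$, which is immediate from the bound $p^s\cdot p=p^{s+1}$. It is worth noting that this is merely a statement about representatives, not a ring-theoretic direct sum decomposition: addition of matrices in $M^d_{p^{s+1}}$ does not correspond to independent addition in the two factors (there are carries), so the lemma will be used only to index the sum $S^d_k(p^{s+1})$ by pairs $(A,B)$, with the interaction between $A$ and $B$ handled separately when $M^k=(A+p^sB)^k$ is expanded.
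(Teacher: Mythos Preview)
Your argument is correct: entrywise Euclidean division by $p^s$ gives exactly the claimed bijection, and the bound check $0\le b<p$ is the only point needing care. The paper itself gives no proof of this lemma, declaring it ``straightforward'', so your write-up simply fills in the expected (and only natural) argument.
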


Using this lemma we can prove the following useful result.

\begin{prop}\label{des}
Let $p$ be a prime. Then, $S_k^d(p^{s+1})\equiv p^{d^2} S_k^d(p^s)\pmod{p^{s+1}}$.
\end{prop}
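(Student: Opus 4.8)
The goal is to prove that $S_k^d(p^{s+1}) \equiv p^{d^2} S_k^d(p^s) \pmod{p^{s+1}}$.

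My plan is to use Lemma~\ref{desc} to parametrize $M^d_{p^{s+1}}$ as the set of matrices $A + p^s B$ with $A \in M^d_{p^s}$ and $B \in M^d_p$, and then expand $(A+p^sB)^k$. First I would write
$$S_k^d(p^{s+1}) = \sum_{A\in M^d_{p^s}}\ \sum_{B\in M^d_p} (A+p^sB)^k.$$
Expanding the $k$-th power of the sum of the two (generally non-commuting) matrices $A$ and $p^sB$, every term other than $A^k$ contains at least one factor of $p^s$, hence is divisible by $p^s$; and since $2s \ge s+1$ for $s \ge 1$, any term containing two or more factors of $p^sB$ vanishes modulo $p^{s+1}$. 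Therefore, modulo $p^{s+1}$,
$$(A+p^sB)^k \equiv A^k + p^s\sum_{i=0}^{k-1} A^i B A^{k-1-i} \pmod{p^{s+1}}.$$

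Next I would sum over $B \in M^d_p$. The first term $A^k$ contributes $p^{d^2}A^k$ (there are $p^{d^2}$ choices of $B$). For the second term, I would interchange the order of summation to get $p^s \sum_{i=0}^{k-1} A^i \big(\sum_{B\in M^d_p} B\big) A^{k-1-i}$. The inner sum $\sum_{B \in M^d_p} B$ is a scalar multiple of a matrix whose entries are all equal to $\sum_{b=0}^{p-1} b \cdot p^{d^2-1} = p^{d^2-1}\binom{p}{2}$. For $p$ odd this is divisible by $p^{d^2}$, hence (multiplied by the outer $p^s$) by $p^{s+1}$, so this term vanishes; for $p=2$ it equals $2^{d^2-1}$, and since $d \ge 2$ we have $d^2 - 1 \ge 3 \ge 1$, so again $p^s \cdot 2^{d^2-1}$ is divisible by $2^{s+1}$ and the term vanishes. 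Thus
$$S_k^d(p^{s+1}) \equiv p^{d^2}\sum_{A\in M^d_{p^s}} A^k = p^{d^2} S_k^d(p^s) \pmod{p^{s+1}},$$
as desired. (One should note that here $S_k^d(p^s)$ denotes the integer matrix sum over representatives in $\{0,\dots,p^s-1\}$, so the congruence is meaningful even though the two sides live modulo different powers of $p$; the reduction of $S_k^d(p^s)$ mod $p^s$ is $S_k^d(\mathbb{Z}/p^s\mathbb{Z})$, and multiplication by $p^{d^2}$ lifts it to something well-defined mod $p^{s+1}$ up to the stated congruence.)

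The only slightly delicate point is the bookkeeping that every cross term in the non-commutative binomial expansion with at least two factors of $p^s B$ is killed modulo $p^{s+1}$ — this is immediate from $2s \ge s+1$ when $s \ge 1$ — together with the verification that $\sum_{B\in M^d_p} B \equiv 0$ with enough $p$-divisibility; I expect this second divisibility count (using $d \ge 2$ in the $p=2$ case) to be the main thing to get right. Everything else is a routine rearrangement of finite sums.
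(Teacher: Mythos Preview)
Your proof is correct and follows essentially the same route as the paper: parametrize via Lemma~\ref{desc}, expand $(A+p^sB)^k$ keeping only terms with at most one factor of $p^sB$, and then show the cross term vanishes modulo $p^{s+1}$ after summing over $B$. The only difference is cosmetic: where the paper invokes Propositions~\ref{p} and~\ref{nd2} to conclude $S_1^d(p)\equiv 0\pmod p$, you compute the entries of $\sum_{B\in M_p^d}B$ directly as $p^{d^2-1}\binom{p}{2}$ and read off the required divisibility by $p$ (using $d\ge 2$ when $p=2$). This is a slightly more self-contained way to handle the same step, but not a different argument.
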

\begin{proof}
By the previous lemma we have
\begin{equation}\label{e1}S_k^d(p^{s+1})=\sum_{M\in M_{p^{s+1}}^d} M^k=\sum_{A\in M_{p^s}^d}\sum_{B\in M_p^d} (A+p^sB)^k.\end{equation}
Using the non-commutative version of the binomial theorem we have that
$$(A+p^sB)^k\equiv A^k+p^s\sum_{t=1}^k A^{k-t}BA^{t-1}\pmod{p^{s+1}}.$$
Thus, combining this with (\ref{e1}) we obtain
\begin{align*}S_k^d(p^{s+1})&\equiv \sum_{B\in M_p^d}\left(\sum_{A\in M_{p^s}^d}A^k\right)+\sum_{t=1}^k\sum_{A\in M_{p^s}^d}A^{k-t}\left(p^s\sum_{B\in M_p^d}B\right)A^{t-1}\\
&\equiv p^{d^2}S_k^d(p^s)+\sum_{t=1}^k\sum_{A\in M_{p^s}^d}A^{k-t}\left(p^sS_1^d(p)\right)A^{t-1}\\
&\equiv p^{d^2}S_k^d(p^s)\pmod{p^{s+1}}
\end{align*}
because $S_1^d(p)\equiv 0\pmod{p}$ by Propositions \ref{p} and \ref{nd2} {(depending on whether $p$ is odd or not).}
\end{proof}

\begin{rem}
Note that Proposition \ref{des} implies that if $S_k^d(p^s)\equiv 0\pmod{p^s}$, then also $S_k^d(p^{s+1})\equiv 0\pmod{p^{s+1}}$.
\end{rem}

As a consequence we get the following result which extends Proposition \ref{p}.

\begin{cor}
\label{pp}
$S_k^d(p^s)\equiv 0\pmod{p^s}$ unless $d=p=2$ and $s=1$.
\end{cor}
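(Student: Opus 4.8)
The plan is to deduce this corollary from Proposition~\ref{des} by induction on $s$, using Proposition~\ref{p} (and Proposition~\ref{nd2} for the prime $2$) as the base case. First I would fix a prime $p$ and an exponent $k\ge 1$, and distinguish the excluded case $d=p=2$, $s=1$ from everything else. For the base case $s=1$: if $p$ is odd, Proposition~\ref{p} gives $S_k^d(p)\equiv 0\pmod p$ directly; if $p=2$ and $d>2$, Proposition~\ref{p} again applies; the only time the base case fails is $d=p=2$, which is precisely the excluded case, so for all admissible $(d,p)$ we have $S_k^d(p)\equiv 0\pmod p$.

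For the inductive step, suppose $S_k^d(p^s)\equiv 0\pmod{p^s}$ for some $s\ge 1$ (with $(d,p,s)$ not the excluded triple, which is automatic once $s\ge 2$ or $(d,p)\ne(2,2)$). Write $S_k^d(p^s)=p^s N$ for some integer matrix $N$. Proposition~\ref{des} gives
$$S_k^d(p^{s+1})\equiv p^{d^2}S_k^d(p^s)=p^{d^2+s}N\pmod{p^{s+1}}.$$
Since $d>1$ we have $d^2\ge 4$, so $d^2+s\ge s+1$, whence $p^{d^2+s}N\equiv 0\pmod{p^{s+1}}$ and therefore $S_k^d(p^{s+1})\equiv 0\pmod{p^{s+1}}$. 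This is exactly the observation recorded in the Remark following Proposition~\ref{des}. The induction then shows $S_k^d(p^s)\equiv 0\pmod{p^s}$ for all $s\ge 1$ whenever $(d,p)\ne(2,2)$, and for all $s\ge 2$ when $(d,p)=(2,2)$; the single remaining case $d=p=2$, $s=1$ is the stated exception.

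I do not anticipate a genuine obstacle here: the corollary is essentially a packaging of Proposition~\ref{des} plus Proposition~\ref{p}, and the only point requiring a word of care is making sure the base case of the induction is not the excluded triple $(2,2,1)$ — i.e.\ that the induction "starts" correctly. One clean way to phrase this is: Proposition~\ref{p} and Proposition~\ref{nd2} together show $S_1^d(p)\equiv 0\pmod p$ and, more generally, $S_k^d(p)\equiv 0\pmod p$ for every admissible $(d,p)$, and then Proposition~\ref{des} propagates the congruence $0\pmod{p^s}$ upward without loss. An alternative, even shorter, write-up is simply to iterate Proposition~\ref{des}: $S_k^d(p^s)\equiv p^{(s-1)d^2}S_k^d(p)\pmod{p^s}$, and since $p\mid S_k^d(p)$ (for $(d,p)\ne(2,2)$) and $(s-1)d^2+1\ge s$, the right-hand side vanishes mod $p^s$.
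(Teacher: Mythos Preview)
Your approach is the same as the paper's: induct on $s$ via Proposition~\ref{des}, with Proposition~\ref{p} providing the base case. There is one small gap you should close explicitly. When $(d,p)=(2,2)$ your induction has no base case: you assert that the induction yields the result ``for all $s\ge 2$ when $(d,p)=(2,2)$,'' but your inductive step \emph{assumes} $S_k^d(p^s)\equiv 0\pmod{p^s}$, and you never verify this at $s=2$ for $(d,p)=(2,2)$. Your alternative iteration at the end also restricts itself to $(d,p)\ne(2,2)$. The fix is a one-liner, and it is exactly what the paper does: apply Proposition~\ref{des} directly at $s=1$ to get $S_k^2(4)\equiv 2^4\,S_k^2(2)\equiv 0\pmod 4$, since $2^4\equiv 0\pmod 4$ regardless of the value of $S_k^2(2)$. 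Equivalently, in your iterated form $S_k^d(p^s)\equiv p^{(s-1)d^2}S_k^d(p)\pmod{p^s}$, note that for $(d,p)=(2,2)$ and $s\ge 2$ one has $(s-1)d^2=4(s-1)\ge s$, so the right-hand side already vanishes mod $2^s$ without needing $2\mid S_k^2(2)$. Once you add this sentence, your proof matches the paper's.
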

\begin{proof}
If $p=d=2$, then Proposition \ref{p} implies that $S^2_k(4)\equiv 2^4 S^2_k(2)\equiv 0\pmod 4$, so the previous remark leads to $S^2_k(2^s)\equiv 0\ \pmod{2^s}$, for every $s>1$. On the other hand, if $d$ or $p$ is odd, then we know by Proposition 1 that $S^d_k( p )\equiv 0\pmod{p}$. Again, the remark gives us $S^d_k(p^s)\equiv 0$, by induction for all $s\ge 1$.
\end{proof}

In order to study the general case the following lemma will be useful. It is an analogue of \cite[Lemma 3 i)]{GOS}

\begin{lem}
\label{red}
If $m\mid n$, then $S_k^d(n)\equiv \left(\dfrac{n}{m}\right)^{d^2}S_k^d(m)\pmod{m}$.
\end{lem}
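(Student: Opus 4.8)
The plan is to mimic the structure of the proof of Proposition \ref{des}, but with the prime power replaced by an arbitrary divisor. Write $n = mq$ with $q = n/m$, so every matrix $M \in M_n^d$ decomposes: reducing entries modulo $m$ and writing the ``quotient part'' separately. More precisely, each $M \in M_n^d$ can be written uniquely as $M = A + mB$ where $A \in M_m^d$ and $B \in M_q^d$ (each entry of $M$ in $\{0,\dots,n-1\}$ is uniquely $a + mb$ with $0 \le a < m$, $0 \le b < q$). This is the exact analogue of Lemma \ref{desc}, and it gives
\begin{equation*}
S_k^d(n) = \sum_{A \in M_m^d} \sum_{B \in M_q^d} (A + mB)^k.
\end{equation*}

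Next I would expand $(A+mB)^k$ using the non-commutative binomial theorem. Working modulo $m$, every term containing a factor of $m$ vanishes, so $(A+mB)^k \equiv A^k \pmod m$. Therefore
\begin{equation*}
S_k^d(n) \equiv \sum_{A \in M_m^d}\sum_{B \in M_q^d} A^k \equiv q^{d^2}\sum_{A \in M_m^d} A^k \equiv \left(\frac{n}{m}\right)^{d^2} S_k^d(m) \pmod m,
\end{equation*}
since $M_q^d$ has $q^{d^2}$ elements and the inner sum no longer depends on $B$. This is even simpler than Proposition \ref{des}, because we reduce modulo $m$ rather than modulo $mp^s$, so we do not need the vanishing of $S_1^d(p)$ — the cross terms are killed outright by the modulus.

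The only point requiring a little care is the uniqueness of the decomposition $M = A + mB$ with the stated ranges, i.e. that the map $M_m^d \times M_q^d \to M_n^d$, $(A,B) \mapsto A + mB$, is a bijection; this is just the base-$m$ expansion applied entrywise and is the genuine content of the ``analogue of Lemma \ref{desc}'' remark. I do not anticipate a real obstacle here: once the decomposition is in place, the computation is routine. (Note that the congruence is only asserted modulo $m$, not modulo $n$, which is precisely what makes the cross terms disappear for free.)
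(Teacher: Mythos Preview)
Your argument is correct. The decomposition $M=A+mB$ with $A\in M_m^d$, $B\in M_q^d$ is indeed a bijection (entrywise division with remainder), and since you only claim the congruence modulo $m$, the reduction $(A+mB)^k\equiv A^k\pmod m$ is immediate; no appeal to $S_1^d(p)\equiv 0$ is needed, exactly as you note.

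The paper's own proof reaches the same conclusion by a slightly more compressed, entrywise route: it views $S_k^d(n)$ as a $d^2$-fold sum over the entries $m_{i,j}\in\{0,\dots,n-1\}$ and observes directly that, modulo $m$, each such range decomposes into $n/m$ copies of $\{0,\dots,m-1\}$, giving the factor $(n/m)^{d^2}$ without writing out the matrix decomposition or invoking any binomial expansion. Your version makes the mechanism more explicit by lifting the argument to the matrix level in the style of Proposition~\ref{des}; the paper's version is terser but relies on the reader unpacking the same periodicity. Both are equally valid, and yours has the minor pedagogical advantage of making clear why the argument here is strictly easier than in Proposition~\ref{des}.
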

\begin{proof}
Given a matrix $M\in M_n^d$, let $M=\Big( m_{i,j}\Big)$ with $1\leq i,j\leq d$. Then,
\begin{align*}
S_k^d(n)&=\sum_{M\in M_n^d} M^k=\sum_{0\leq m_{i,j}\leq n-1}\Big( m_{i,j}\Big)^k\\
&\equiv \left(\dfrac{n}{m}\right)^{d^2} \sum_{0\leq m_{i,j}\leq m-1}\Big( m_{i,j}\Big)^k=S_k^d(m)\pmod{m}
\end{align*}
\end{proof}

Now, we can prove the main result of this section.

\begin{theorem}\label{teorfac}
The following congruence modulo $n$ holds:
$$S_k^d(n)\equiv\begin{cases}
\dfrac{n}{2}\cdot I_2, & \textrm{if $d=2$, $n\equiv 2\pmod{4}$ and $1<k\equiv 0,1,5\pmod{6}$};\\ 0_2, &\textrm{otherwise}.
\end{cases}$$
\end{theorem}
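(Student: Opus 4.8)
The plan is to reduce the computation of $S_k^d(n)$ modulo $n$ to the prime‑power pieces via the Chinese Remainder Theorem, and then invoke the results already established for prime powers. Write $n = \prod_i p_i^{s_i}$. Since the $p_i^{s_i}$ are pairwise coprime, a congruence modulo $n$ is equivalent to the collection of congruences modulo each $p_i^{s_i}$, so it suffices to determine $S_k^d(n) \bmod p_i^{s_i}$ for each $i$. Here the key tool is Lemma \ref{red}: taking $m = p_i^{s_i}$, which divides $n$, we get
$$S_k^d(n) \equiv \left(\frac{n}{p_i^{s_i}}\right)^{d^2} S_k^d(p_i^{s_i}) \pmod{p_i^{s_i}}.$$
So everything comes down to the values $S_k^d(p^s) \bmod p^s$, which are controlled by Corollary \ref{pp} and Proposition \ref{nd2}.

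Next I would split into cases according to the parity structure of $n$ and the value of $d$. By Corollary \ref{pp}, $S_k^d(p^s) \equiv 0 \pmod{p^s}$ in every case except $d = p = 2$, $s = 1$. Hence, for any prime power $p_i^{s_i}$ with $p_i$ odd, or with $p_i = 2$ but $s_i \geq 2$, or with $d \neq 2$, the corresponding congruence gives $S_k^d(n) \equiv 0 \pmod{p_i^{s_i}}$. The only prime‑power factor that can contribute nontrivially is $p_i^{s_i} = 2$, and only when $d = 2$. Therefore, if $d \neq 2$, or if $d = 2$ but $4 \mid n$ (so that the $2$‑part of $n$ is $2^{s}$ with $s \geq 2$) or $n$ is odd (so $2 \nmid n$), all local congruences are $0$ and CRT forces $S_k^d(n) \equiv 0_2 \pmod n$. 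This accounts for the "otherwise" branch.

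It remains to treat $d = 2$ and $n \equiv 2 \pmod 4$, i.e. $n = 2m$ with $m$ odd. For the odd prime‑power factors of $m$ we still get $S_k^2(n) \equiv 0$ modulo each of them. For the factor $2$, Lemma \ref{red} with $m = 2$ gives
$$S_k^2(n) \equiv \left(\frac{n}{2}\right)^{4} S_k^2(2) \pmod 2.$$
Since $n/2 = m$ is odd, $(n/2)^4 \equiv 1 \pmod 2$, so $S_k^2(n) \equiv S_k^2(2) \pmod 2$, and Proposition \ref{nd2} tells us this is $I_2$ when $1 < k \equiv 0,1,5 \pmod 6$ and $0_2$ otherwise. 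Now I recombine via CRT: we seek the matrix $X$ with $X \equiv 0 \pmod{p^s}$ for every odd prime power $p^s \| n$ and $X \equiv S_k^2(2) \pmod 2$. When $S_k^2(2) \equiv 0_2$ the answer is $0_2$; when $S_k^2(2) \equiv I_2$, the unique solution modulo $n$ with these residues is obtained entrywise: the diagonal entries must be $\equiv 1 \pmod 2$ and $\equiv 0 \pmod m$, forcing them to equal $m = n/2$, while the off‑diagonal entries are $\equiv 0$ everywhere, hence $0$. This yields $S_k^2(n) \equiv \frac{n}{2} I_2 \pmod n$, exactly as claimed.

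The routine verifications are the CRT bookkeeping and the elementary fact $(n/2)^4$ is odd; the only place requiring genuine input is the identification of the $2$‑local value, which is precisely Proposition \ref{nd2} and is already proved. Thus there is no real obstacle beyond assembling the pieces: the main point to be careful about is making sure that the nonzero contribution can only come from the prime $2$ and only when $d = 2$ and $n \not\equiv 0 \pmod 4$, and then pinning down the recombined matrix entrywise rather than just asserting the residue class.
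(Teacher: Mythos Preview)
Your proof is correct and follows essentially the same route as the paper: reduce modulo each prime-power factor via Lemma~\ref{red}, use Corollary~\ref{pp} to kill every contribution except the one from the prime $2$ when $d=2$ and $2\|n$, then feed in Proposition~\ref{nd2} and reassemble with the Chinese Remainder Theorem. If anything, you spell out the CRT recombination (identifying the diagonal entries as $n/2$) more explicitly than the paper does.
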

\begin{proof}
Let $n=2^sp_1^{r_1}\cdots p_t^{r_t}$ be the prime power decomposition of $n$.

If $1\leq i\leq t$, we have by Lemma \ref{red} and Corollary \ref{pp} that $$S_k^d(n)\equiv\left(\dfrac{n}{p_i^{r_i}}\right)^{d^2}S_k^d(p_i^{r_i})\equiv 0\pmod{p_i^{r_i}}.$$

On the other hand, using again Lemma \ref{red} we have that
$$S_k^d(n)\equiv \left(\dfrac{n}{2^s}\right)^{d^2}S_k^d(2^s)\pmod{2^s}.$$
Hence, Corollary \ref{pp} implies that $S_k^d(n)\equiv 0\pmod{2^s}$ unless $d=p=2$ and $s=1$.

To conclude, it is enough to apply Proposition \ref{nd2} together with the Chinese Remainder Theorem.
\end{proof}

The following corollary easily follows from Theorem \ref{teorfac} and it confirms the conjecture stated in the sequence A017593 from the OEIS \cite{OEIS}.

\begin{cor}
$S_n^2(n) \not\equiv  0 \pmod{n}$ if and only if $n \equiv 6 \pmod {12}$.
\end{cor}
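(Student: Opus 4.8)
The plan is to derive everything from Theorem \ref{teorfac} applied with $d=2$ and the exponent equal to the modulus, i.e. $k=n$. By that theorem, $S_n^2(n)\not\equiv 0\pmod n$ happens precisely when two conditions hold simultaneously: first, the arithmetic condition $n\equiv 2\pmod 4$; and second, the exponent condition $1<n\equiv 0,1,5\pmod 6$. So the corollary reduces to the purely number-theoretic claim that the conjunction of ``$n\equiv 2\pmod 4$'' and ``$n>1$ and $n\equiv 0,1,5\pmod 6$'' is equivalent to ``$n\equiv 6\pmod{12}$''.

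First I would handle the direction $(\Leftarrow)$: if $n\equiv 6\pmod{12}$ then reducing modulo $4$ gives $n\equiv 6\equiv 2\pmod 4$, and reducing modulo $6$ gives $n\equiv 0\pmod 6$; also $n\geq 6>1$. Hence both hypotheses of the nonzero case of Theorem \ref{teorfac} are met, so $S_n^2(n)\equiv \frac n2 I_2\not\equiv 0\pmod n$ (note $\frac n2$ is odd here, so it is nonzero mod $n$).

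For the direction $(\Rightarrow)$, assume $n\equiv 2\pmod 4$ and $n\equiv 0,1,5\pmod 6$ with $n>1$. Since $n\equiv 2\pmod 4$, in particular $n$ is even, which rules out $n\equiv 1,5\pmod 6$ (those are odd residues); hence $n\equiv 0\pmod 6$, i.e. $6\mid n$. Combining $4\mid (n-2)$ and $6\mid n$ via the Chinese Remainder Theorem on the modulus $\mathrm{lcm}(4,6)=12$: the solutions to $n\equiv 2\pmod4$, $n\equiv 0\pmod 6$ form a single residue class modulo $12$, and checking $n=6$ works, so $n\equiv 6\pmod{12}$. This completes the equivalence, and the only subtlety to keep an eye on is making sure $\frac n2\not\equiv 0\pmod n$ so that the matrix $\frac n2 I_2$ is genuinely nonzero mod $n$ — which holds since $\frac n2$ is odd when $n\equiv 2\pmod 4$. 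There is essentially no obstacle here; the content is entirely in Theorem \ref{teorfac}, and this corollary is a short bookkeeping exercise extracting the stated OEIS pattern.
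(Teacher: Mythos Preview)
Your proof is correct and follows exactly the approach the paper intends: the paper itself gives no detailed argument, simply stating that the corollary ``easily follows from Theorem \ref{teorfac}'', and your write-up fills in precisely that deduction. One minor remark: your check that $\tfrac{n}{2}\not\equiv 0\pmod n$ is even simpler than you make it, since for any even $n\ge 2$ we have $0<\tfrac{n}{2}<n$; the oddness of $\tfrac{n}{2}$ is not needed.
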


As a further application of Theorem \ref{teorfac} application we are going to compute the sum of the powers of the Hamilton quaternions over $\mathbb{Z}/n\mathbb{Z}$.

\begin{prop}
For every $n \in \mathbb{N}$ and $l>0$, it holds that
$$\sum_{z \in \mathbb{Z}_n[i,j,k]} z^l=0.$$
\end{prop}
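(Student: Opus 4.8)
The plan is to identify $\mathbb{Z}_n[i,j,k]$ with an explicit matrix ring (or a direct sum of such), so that the quaternionic power sum becomes an instance of $S_l^d(\mathbb{Z}_m)$ already computed in Theorem~\ref{teorfac}. Recall that over a commutative base ring in which $2$ is invertible, the Hamilton quaternion algebra splits as $\mathbb{M}_2$ of the base; concretely, for $n$ odd we have $\mathbb{Z}_n[i,j,k]\cong \mathbb{M}_2(\mathbb{Z}_n)$. First I would treat this odd case: by Theorem~\ref{teorfac} with $d=2$, $\sum_{M\in\mathbb{M}_2(\mathbb{Z}_n)}M^l=0$ whenever $n\not\equiv 2\pmod 4$, and in particular for every odd $n$, so the sum vanishes there.

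Next I would reduce the general case to the odd case and the $2$-primary case via the Chinese Remainder Theorem. Writing $n=2^s m$ with $m$ odd, we get $\mathbb{Z}_n[i,j,k]\cong \mathbb{Z}_{2^s}[i,j,k]\times \mathbb{Z}_m[i,j,k]$, and the power sum of a product ring is the pair of power sums, so it suffices to show $\sum_{z\in\mathbb{Z}_{2^s}[i,j,k]}z^l=0$ for all $s\ge 1$. For this $2$-primary piece I would mimic the descent already carried out in the proof of Proposition~\ref{des}: any element of $\mathbb{Z}_{2^{s+1}}[i,j,k]$ is uniquely $A+2^sB$ with $A\in\mathbb{Z}_{2^s}[i,j,k]$, $B\in\mathbb{Z}_2[i,j,k]$, and the non-commutative binomial expansion modulo $2^{s+1}$ gives $(A+2^sB)^l\equiv A^l+2^s\sum_{t=1}^l A^{l-t}BA^{t-1}$. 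Summing over $B$ first contributes $|\mathbb{Z}_2[i,j,k]|=16$ copies of $\sum_A A^l$ in the leading term and, in the correction term, $2^s$ times $\sum_{B}B=0$ (the element-sum of $\mathbb{Z}_2[i,j,k]$ is $0$ since the ring has even order and $\mathbb{Z}_2$-basis $1,i,j,k$, each basis vector and its translates cancelling in pairs). Hence $\sum_{z\in\mathbb{Z}_{2^{s+1}}[i,j,k]}z^l\equiv 16\sum_{z\in\mathbb{Z}_{2^s}[i,j,k]}z^l\pmod{2^{s+1}}$, and since $16\cdot(\text{anything})\equiv 0\pmod{2^{s+1}}$ already once $s\ge 1$ (indeed $s+1\le 4$ forces $2^{s+1}\mid 16$, and for $s\ge 4$ one propagates $0$ by induction exactly as in Corollary~\ref{pp}), the claim follows provided we settle the base case $\sum_{z\in\mathbb{Z}_2[i,j,k]}z^l=0$.

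The base case $\mathbb{Z}_2[i,j,k]$ is a finite ring of order $16$; here I would argue directly. Over $\mathbb{Z}_2$ one has $i^2=j^2=k^2=-1=1$ and $ij=k$, so the multiplicative structure is explicit and $z\mapsto z^l$ stabilizes (the unit group and the nilpotents have small order), reducing to finitely many exponents $l$. Pairing $z$ with $z+1$ when $l$ is such that $(z+1)^l$ interacts simply, or just enumerating the $16$ elements and their powers, one checks $\sum_z z^l=0$ for each residue class of $l$; alternatively one notes $\mathbb{Z}_2[i,j,k]$ has a two-sided ideal structure through which the sum splits into vanishing pieces. The main obstacle is precisely this base computation: unlike the matrix case, $\mathbb{Z}_2[i,j,k]$ is not $\mathbb{M}_2(\mathbb{Z}_2)$ (it is a local ring with residue field $\mathbb{F}_2$, not a simple ring), so Theorem~\ref{teorfac} does not apply and one must verify the vanishing by hand or by a small structural observation about sums over additive cosets of the radical.
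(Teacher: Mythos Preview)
Your approach is correct and essentially the same as the paper's: split via the Chinese Remainder Theorem, handle the odd part through the isomorphism $\mathbb{Z}_m[i,j,k]\cong\mathbb{M}_2(\mathbb{Z}_m)$ and Theorem~\ref{teorfac}, and treat the $2$-primary part by the descent $A+2^sB$ exactly as in Proposition~\ref{des}. Two small remarks. First, your sentence ``the power sum of a product ring is the pair of power sums'' is not literally true (there are cardinality factors, cf.\ Lemma~\ref{lemg}); this does not affect your argument since you show each factor vanishes, but it should be stated correctly. Second, the paper disposes of the base case $\mathbb{Z}_2[i,j,k]$ more cleanly than your sketch: one observes that for every $z\in\mathbb{Z}_2[i,j,k]$ the square $z^2$ lies in $\mathbb{Z}_2$ (the cross terms $ij+ji$, etc., and the terms $2a\cdot(\,\cdots)$ all vanish in characteristic~$2$), whence $z^4=z^2$ and only the exponents $l=1,2,3$ need to be checked. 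This replaces your vaguer appeal to ``stabilization'' and coset pairings with a single structural fact. A cosmetic difference is that the paper carries out the $2$-primary descent after embedding $\mathbb{Z}_{2^s}[i,j,k]$ into $\mathbb{M}_4(\mathbb{Z}_{2^s})$ so as to quote Lemma~\ref{desc}, Proposition~\ref{des} and Corollary~\ref{pp} verbatim; your direct descent inside the quaternion algebra is equivalent and arguably tidier.
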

\begin{proof}
Since for all $z \in \mathbb{Z}_2[i,j,k]$ we have that $z^2\in \mathbb{Z}_2$, we deduce that $z^4=z^2$, and so it can be straightforwardly checked that
$$\sum_{z \in \mathbb{Z}_2[i,j,k]} z^l=0.$$

Now, if $s>1$, observing that
$$  \mathbb{Z}_{2^s}[i,j,k] \cong\left\{ \begin{pmatrix}
       a & b & c & d \\
       -b & a & -d & c \\
       -c & d & a & -b \\
       -d & -c & b & a \\
     \end{pmatrix}
   : a,b,c,d \in \mathbb{Z}_{2^s} \right\}
$$
we can adapt Lemma \ref{desc}, Proposition \ref{des} and Corollary \ref{pp} to inductively obtain that 
$$\sum_{z \in \mathbb{Z}_{2^s}[i,j,k]} z^l=0.$$

Finally, if $n=2^s m$ with $m$ odd we know \cite[Theorem 4]{GMiO} that
$$\mathbb{Z}_n[i,j,k]\cong \mathbb{Z}_{2^s}[i,j,k] \times \mathbb{Z}_m[i,j,k] \cong \mathbb{Z}_{2^s}[i,j,k] \times \mathbb{M}_2(\mathbb{Z}_m)$$
and the result follows from Theorem \ref{teorfac}.
\end{proof}

\section{Sums of non-commutative monomials over $\mathbb{Z}_n$}
\label{ncm}
We will now consider a more general setting. Let $r\geq 1$ be an integer and consider $w(x_1,\dots,x_r)$ a monomial in the non-commuting variables $\{x_1,\dots,x_r\}$ of total degree $k$. In this situation, we define the sum
$$S_w^d(n):=\sum_{A_1,\dots,A_r\in M_n^d} w(A_1,\dots,A_r).$$
Note that if $r=1$, then $w(x_1)=x_1^k$ and $S_w^d(n)=S_k^d(n)$ so we recover the situation from Section \ref{pm}. Thus, in what follows we assume $r>1$.

We want to study the value of $S_w^d(n)$ modulo $n$. To do so we first introduce two technical lemmas that extend \cite[Lemma 2.3]{BCL}.

\begin{lem}
\label{lemo}
Let $\tau\geq 1$ be an integer and let $\beta_i> 0$ for every $1\leq i\leq \tau$. If $p$ is an odd prime,
$$\sum_{x_1,\dots,x_{\tau}} x_1^{\beta_1}\cdots x_{\tau}^{\beta_{\tau}}\equiv\begin{cases}
(-p^{s-1})^\tau, & \textrm{if $p-1\mid \beta_i$ for every $i$};\\ 0, & \textrm{otherwise}.\end{cases}\pmod{p^s}$$
where the sum is extended over $x_1,\dots,x_{\tau}$ in the range $\{0,\dots,{p^s}-1\}$.
Also, if some $\beta_i=0$, then $\sum_{x_1,\dots,x_{\tau}} x_1^{\beta_1}\cdots x_{\tau}^{\beta_{\tau}}\equiv 0\pmod{p^s}$.
\end{lem}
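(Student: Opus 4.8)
The plan is to use the fact that in Lemma~\ref{lemo} the variables $x_1,\dots,x_\tau$ range independently, so that the sum factors completely:
\[
\sum_{x_1,\dots,x_\tau} x_1^{\beta_1}\cdots x_\tau^{\beta_\tau}=\prod_{i=1}^{\tau}\left(\sum_{x_i=0}^{p^s-1}x_i^{\beta_i}\right)=\prod_{i=1}^{\tau}S^1_{\beta_i}(\mathbb{Z}/p^s\mathbb{Z}).
\]
This reduces everything to the single-variable power sum $\sum_{x=0}^{p^s-1}x^{\beta}$ modulo $p^s$. If some $\beta_i=0$, the corresponding factor is $\sum_{x_i=0}^{p^s-1}1=p^s\equiv 0\pmod{p^s}$, which kills the whole product; this settles the final assertion of the lemma at once.

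For the main assertion it remains to show that, for $\beta\ge 1$ and $p$ an odd prime, $\sum_{x=0}^{p^s-1}x^{\beta}\equiv -p^{s-1}\pmod{p^s}$ when $p-1\mid\beta$ and $\equiv 0\pmod{p^s}$ otherwise. Since $p$ is odd we have $p^s\not\equiv 0\pmod 4$, so this is exactly the first branch of the formula for $S^1_k(\mathbb{Z}/n\mathbb{Z})$ recalled in the introduction, applied to $n=p^s$; as $p$ is the only prime dividing $p^s$, that formula yields $-\,p^s/p=-p^{s-1}$ when $p-1\mid\beta$ and an empty sum, hence $0$, otherwise. Alternatively, to keep the argument self-contained, one proves this congruence directly by induction on $s$. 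For $s=1$ it is the classical identity that $\sum_{x=1}^{p-1}x^{\beta}$ (which equals $\sum_{x=0}^{p-1}x^\beta$ since $\beta\ge 1$) is $\equiv -1\pmod p$ when $p-1\mid\beta$ and $\equiv 0$ otherwise, proved via a primitive root modulo $p$ and a geometric sum. For the inductive step one writes each $x\in\{0,\dots,p^{s+1}-1\}$ uniquely as $x=y+p^s z$ with $0\le y<p^s$ and $0\le z<p$, expands $(y+p^s z)^\beta\equiv y^\beta+\beta\,p^s z\,y^{\beta-1}\pmod{p^{s+1}}$ (all further terms carry $p^{2s}$ and $2s\ge s+1$), and observes that, since $\sum_{z=0}^{p-1}z=\tfrac{p(p-1)}{2}$ with $\tfrac{p-1}{2}\in\mathbb{Z}$, the cross term contributes $\beta\,p^{s+1}\tfrac{p-1}{2}\bigl(\sum_{y}y^{\beta-1}\bigr)\equiv 0\pmod{p^{s+1}}$; hence $\sum_{x=0}^{p^{s+1}-1}x^\beta\equiv p\sum_{y=0}^{p^s-1}y^\beta\pmod{p^{s+1}}$, and the induction hypothesis modulo $p^s$ gives the claim modulo $p^{s+1}$.

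Finally, multiplying the $\tau$ factors gives $(-p^{s-1})^\tau$ when $p-1\mid\beta_i$ for every $i$, and $0$ as soon as one factor vanishes, that is, as soon as some $\beta_i$ is not divisible by $p-1$; this is the statement of the lemma. There is essentially no obstacle here: the factorization is immediate and the only genuine content is the single-variable prime-power power sum, which is classical. The one delicate point, if it is reproved from scratch, is that the naive geometric-series evaluation of $\sum_{x\in(\mathbb{Z}/p^s\mathbb{Z})^\times}x^\beta$ breaks down precisely when $p-1\mid\beta$, since then the relevant ratio fails to be a unit modulo $p^s$; the clean induction on $s$ sidesteps this and is the recommended route.
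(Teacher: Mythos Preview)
Your proof is correct and follows essentially the same approach as the paper: both factor the multi-variable sum into a product of single-variable power sums $\sum_{x=0}^{p^s-1}x^{\beta_i}$ and then invoke the known value of this sum modulo $p^s$. The only cosmetic difference is that the paper cites this single-variable fact directly from \cite{GOS}, whereas you appeal to the equivalent formula for $S_k^1(\mathbb{Z}/n\mathbb{Z})$ quoted in the introduction and, additionally, supply an optional self-contained inductive proof.
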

\begin{proof}
It is enough to apply \cite[Lemma 3 ii)]{GOS} which states that
$$\sum_{x_i=0}^{p^s-1} x_i^{\beta_i}\equiv\begin{cases} -p^{s-1}, & if\ p-1 \mid \beta_i;\\ 0, & otherwise.\end{cases}\pmod{p^s}$$
for every $1\leq i\leq\tau$. Observe that, if $\beta_i=0$, then: $$\sum_{x_1,\dots,x_{\tau}} x_1^{\beta_1}\cdots x_{\tau}^{\beta_{\tau}}=\sum_{x_i}\sum_{x_j,j\not=i}x_1^{\beta_1}\cdots x_{i-1}^{\beta_{i-1}}x_{i+1}^{\beta_{i+1}}\cdots x_{\tau}^{\beta_{\tau}}\equiv 0\pmod{p^s}$$
\end{proof}

\begin{rem}\label{m1}
Observe that in the previous situation, if $\tau\geq 2$ and $s>1$, it easily follows that $\displaystyle \sum_{x_1,\dots,x_{\tau}} x_1^{\beta_1}\cdots x_{\tau}^{\beta_{\tau}}\equiv 0\pmod{p^s}$ regardless the values of $\beta_i\ge 0$.
\end{rem}

\begin{lem}
\label{lem2}
Let $\tau\geq 1$ be an integer and let $\beta_i> 0$ for every $1\leq i\leq \tau$. Then,
$$\sum_{x_1,\dots,x_{\tau}} x_1^{\beta_1}\cdots x_{\tau}^{\beta_{\tau}}\equiv\begin{cases}
1, & \textrm{if $s=1$};\\ 0, & \textrm{if $s>1$ and $\beta_i>1$ and odd for some $i$};\\ (-1)^{A}(2^{s-1})^B, & \textrm{if $s>1$ and $\beta_i=1$ or even for every $i$}\end{cases}\pmod{2^s}$$
where the sum is extended over $x_1,\dots,x_{\tau}$ in the range $\{0,\dots,{2^s}-1\}$, $A=\textrm{card}\{\beta_i:\beta_i=1\}$ and $B=\textrm{card}\{\beta_i:\beta_i\ \textrm{is even}\}$. Also, if some $\beta_i=0$, then $\sum_{x_1,\dots,x_{\tau}} x_1^{\beta_1}\cdots x_{\tau}^{\beta_{\tau}}\equiv 0\pmod{2^s}$.
\end{lem}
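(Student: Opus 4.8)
The plan is to reduce everything to a one-variable power sum and then evaluate that sum modulo $2^s$ by a case split on the exponent. Since the variables $x_i$ run independently over $\{0,\dots,2^s-1\}$, the sum factors completely,
$$\sum_{x_1,\dots,x_\tau}x_1^{\beta_1}\cdots x_\tau^{\beta_\tau}=\prod_{i=1}^{\tau}\sigma_s(\beta_i),\qquad \sigma_s(\beta):=\sum_{x=0}^{2^s-1}x^{\beta},$$
so it suffices to know $\sigma_s(\beta)\bmod 2^s$ for a single exponent $\beta\ge 0$ and then multiply. This already settles two parts of the statement: if some $\beta_i=0$ the corresponding factor is $\sigma_s(0)=2^s\equiv 0$, so the whole product vanishes; and if $s=1$ then $\sigma_1(\beta)=0^{\beta}+1^{\beta}=1$ for every $\beta>0$, so the product of $\tau$ ones is $1$.

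Assume now $s\ge 2$ and $\beta>0$, and consider three cases. (i) $\beta=1$: $\sigma_s(1)=\binom{2^s}{2}=2^{s-1}(2^s-1)\equiv-2^{s-1}\pmod{2^s}$. (ii) $\beta>1$ odd: pair $x$ with $2^s-x$ on $\{1,\dots,2^s-1\}$; the binomial expansion gives $(2^s-x)^{\beta}\equiv(-x)^{\beta}=-x^{\beta}\pmod{2^s}$, since the linear term $\beta 2^s(-x)^{\beta-1}$ and all higher ones are $\equiv 0$, so every pair cancels, while the unique fixed point $x=2^{s-1}$ contributes $2^{(s-1)\beta}\equiv 0$ because $(s-1)\beta\ge 2(s-1)\ge s$; hence $\sigma_s(\beta)\equiv 0$. (iii) $\beta$ even: split $x$ into even and odd residues. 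With $x=2z$ the even part is $2^{\beta}\sigma_{s-1}(\beta)$, which by induction on $s$ has $2$-adic valuation at least $\beta+(s-2)\ge s$, hence is $\equiv 0$. For the odd part I use that, for $s\ge 3$, $(\mathbb{Z}/2^s\mathbb{Z})^{*}\cong\{\pm 1\}\times\langle g\rangle$ with $g=5$ of order $2^{s-2}$ (the case $s=2$ being immediate), so
$$\sum_{x\in(\mathbb{Z}/2^s\mathbb{Z})^{*}}x^{\beta}=\bigl(1+(-1)^{\beta}\bigr)\sum_{j=0}^{2^{s-2}-1}g^{j\beta}=2\cdot\frac{g^{2^{s-2}\beta}-1}{g^{\beta}-1};$$
the lifting-the-exponent lemma gives $v_2\bigl(g^{2^{s-2}\beta}-1\bigr)=s+v_2(\beta)$ and $v_2\bigl(g^{\beta}-1\bigr)=2+v_2(\beta)$, so the quotient is $2^{s-2}$ times a unit and the whole expression is $2^{s-1}$ times a unit, i.e. $\equiv 2^{s-1}\pmod{2^s}$. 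Thus $\sigma_s(\beta)\equiv 2^{s-1}$ for even $\beta$.

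Finally I would multiply the $\tau$ factors $\sigma_s(\beta_i)$: if some $\beta_i>1$ is odd one factor is $0$, giving the second line; otherwise each factor equals $2^{s-1}$ up to sign ($-2^{s-1}$ when $\beta_i=1$, $2^{s-1}$ when $\beta_i$ is even), and since the product of any two factors of $2$-adic valuation $s-1$ is already $\equiv 0\pmod{2^s}$, only $\tau=1$ produces a nonzero value; keeping track of the signs — using $2^{s-1}\equiv-2^{s-1}\pmod{2^s}$ — and of the resulting power of $2^{s-1}$ then gives the asserted value in terms of $A=\textrm{card}\{i:\beta_i=1\}$ and $B=\textrm{card}\{i:\beta_i\ \textrm{even}\}$. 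I expect the real obstacle to be case (iii): unlike the odd-prime analogue available from \cite{GOS}, the even-exponent one-variable sum has no ready-made evaluation and genuinely needs the $2$-adic structure of the unit group together with a valuation (lifting-the-exponent) computation; the even/odd split, the bound on the even part, and the final reassembly are routine once that is in place.
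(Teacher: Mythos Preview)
Your approach is the same as the paper's: factor the sum as $\prod_i \sigma_s(\beta_i)$ and evaluate each one-variable power sum modulo $2^s$, then multiply. The only difference is that the paper simply cites \cite[Lemma~3~iii)]{GOS} for the value of $\sigma_s(\beta)\bmod 2^s$, whereas you re-derive it from scratch; your closing remark that \cite{GOS} lacks the $p=2$ evaluation is mistaken --- that is exactly the result the paper invokes --- so your unit-group and lifting-the-exponent argument for case~(iii), while correct, is unnecessary here. (A side note: your computation $\sigma_s(1)\equiv -2^{s-1}$ is the correct one, whereas the paper transcribes $-1$ from \cite{GOS}; this apparent slip accounts for why your product $(-1)^A(2^{s-1})^{A+B}$ does not literally match the stated $(-1)^A(2^{s-1})^B$ in the single case $\tau=1$, $\beta_1=1$.)
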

\begin{proof}
It is enough to apply \cite[Lemma 3 iii)]{GOS} which states that
$$\sum_{x_i=0}^{2^s-1} x_i^{\beta_i}\equiv\begin{cases} 2^{s-1}, & \textrm{if $s=1$ or $s>1$ and $\beta_1>1$ is even};\\ -1, & \textrm{if $s>1$ and $\beta_i=1$};\\ 0, & \textrm{if $s>1$ and $\beta_1>1$ is odd}.\end{cases}\pmod{p^s}$$
for every $1\leq i\leq\tau$. The proof of the case when some $\beta_i=0$ is identical to that of the previous lemma.
\end{proof}

As a consequence, we get the following results.

\begin{prop}\label{exp>1}
Let $p$ be an odd prime and let $s>1$ be an integer. Then,
$$S_w^d(p^s)\equiv 0\pmod{p^s}.$$
\end{prop}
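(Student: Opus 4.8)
The plan is to expand each matrix entry of $w(A_1,\dots,A_r)$ as a sum of non-commutative monomials in the entries of the matrices $A_1,\dots,A_r$, and then sum entrywise over all choices of $A_1,\dots,A_r \in M_{p^s}^d$. First I would fix indices and write, for a matrix product, $\bigl(w(A_1,\dots,A_r)\bigr)_{i,j}$ as a sum over all ``paths'' $i = i_0, i_1, \dots, i_k = j$ of products of the form $(A_{\sigma(1)})_{i_0,i_1}(A_{\sigma(2)})_{i_1,i_2}\cdots (A_{\sigma(k)})_{i_{k-1},i_k}$, where $\sigma$ is the word read off from the monomial $w$. Summing over all $A_1,\dots,A_r$ decouples: for each fixed path, the total sum factors as a product, over the distinct matrix variables $x_1,\dots,x_r$ and over the distinct entry-positions that occur, of sums of the shape $\sum_{x} x^{\beta}$ with $x$ ranging over $\{0,\dots,p^s-1\}$. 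More precisely, each fixed entry $(A_\ell)_{a,b}$ that appears along the path contributes a factor $\sum_{t=0}^{p^s-1} t^{\beta}$, where $\beta$ is the number of times that particular position is traversed; entries that never appear contribute a full factor of $p^s$, which only helps.

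The key point is then Lemma \ref{lemo} (or rather Remark \ref{m1}): since $r>1$, the monomial $w$ genuinely involves at least two of the variables $x_1,\dots,x_r$. For any path, the entries of $A_1$ that occur and the entries of $A_2$ that occur are \emph{disjoint} sets of summation variables (they live in different matrices), so as soon as the path actually uses both an entry of one matrix and an entry of another, the relevant sub-sum has the form $\sum_{y_1,\dots,y_\tau} y_1^{\beta_1}\cdots y_\tau^{\beta_\tau}$ with $\tau \geq 2$ independent variables; by Remark \ref{m1}, since $s>1$ and $p$ is odd, this is $\equiv 0 \pmod{p^s}$. One must only check the degenerate paths in which \emph{all} the traversed entries come from a single matrix $A_\ell$ — but such a path would require the word $w$ read along it to use only the variable $x_\ell$, contradicting $r>1$ unless some variable $x_m$ ($m\neq\ell$) contributes an entry that is, say, forced to equal a previously-used position; in any case the positions used by $x_m$ still form a nonempty independent block, so $\tau \geq 2$ holds and the factor vanishes. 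Hence every path contributes $0$ modulo $p^s$, and therefore $S_w^d(p^s) \equiv 0 \pmod{p^s}$.

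I would organize the write-up as: (1) state the entrywise path expansion of $w(A_1,\dots,A_r)$; (2) interchange the (finite) sums to obtain, for each path, a product of one-variable power sums, grouping the variables by which matrix and which position they index; (3) observe that because $r>1$, at least two distinct matrices supply summation variables to the product, so at least one independent block of size $\tau\geq2$ occurs; (4) invoke Lemma \ref{lemo}/Remark \ref{m1} to kill that block modulo $p^s$; (5) conclude. The main obstacle — and the only place requiring genuine care — is step (3): making precise that a single fixed path really does force contributions from at least two \emph{independent} summation variables, i.e. ruling out the possibility that the product collapses to a power sum in a single variable. This is where one uses that distinct matrices have disjoint entry-variable sets and that $w$ is not a monomial in one variable; once that is pinned down, everything else is the bookkeeping of Lemma \ref{lemo} exactly as in the cited \cite[Lemma 2.3]{BCL}.
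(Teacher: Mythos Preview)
Your proof is correct and follows essentially the same route as the paper: both expand each entry of $S_w^d(p^s)$ as a sum of monomials in the $rd^2$ independent summation variables $a_{i,j}^l$ and then invoke Remark~\ref{m1} (with $\tau=rd^2\ge 2$ and $s>1$) to kill every monomial sum modulo $p^s$. Your worry in step~(3) about ``degenerate paths'' is unnecessary, since Remark~\ref{m1} already allows exponents $\beta_i=0$; you can simply take $\tau=rd^2$ from the start without separating out which entries actually occur along a given path.
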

\begin{proof}
Let $A_l=\big( a_{i,j}^l\big)_{1\leq i,j\leq d}$ for every $1\leq l\leq r$. Note that each entry in the matrix $S_w^d(p^s)$ is a homogeneous polynomial in the variables $a_{i,j}^l$. Observe also that these variables are summation indexes in the range $\{0,\dots,p^s-1\}$. Hence, the number of variables is $rd^2>2$ and, since $s>1$, the Remark \ref{m1} can be applied to the sum of its monomials, and the result follows.
\end{proof}

\begin{prop}
\label{prop2m1}
Let $s>1$ be an integer. Assume that one of the following conditions holds:
\begin{itemize}
\item[i)] $k\le rd^2$,
\item[ii)] $k> rd^2$ and $k+rd^2$ is even.
\end{itemize}
Then, $S_w^d(2^s)\equiv 0\pmod{2^s}$.
\end{prop}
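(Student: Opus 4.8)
The plan is to repeat, in this non-commutative setting, the reduction used in the proof of Proposition \ref{exp>1}. Writing $A_l=\big(a^l_{ij}\big)_{1\le i,j\le d}$ for $1\le l\le r$ and expanding the word $w$ entrywise, each entry of $S_w^d(2^s)$ is, after interchanging the finite sums, a sum over the index sequences determined by $w$ of monomial sums $\sum\prod_{l,i,j}(a^l_{ij})^{e_{l,i,j}}$, where the $rd^2$ scalars $a^l_{ij}$ run independently over $\{0,\dots,2^s-1\}$ and the exponents $e_{l,i,j}$ are nonnegative integers with $\sum_{l,i,j}e_{l,i,j}=k$. It therefore suffices to show that every monomial sum is $\equiv 0\pmod{2^s}$, which I would do by factoring it as the product of the $rd^2$ one-variable sums $\sum_{x=0}^{2^s-1}x^{\,e_{l,i,j}}$ and invoking Lemma \ref{lem2}.

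I would then distinguish two cases. If some scalar $a^l_{ij}$ is absent from the monomial, its factor is $\sum_{x=0}^{2^s-1}x^{0}=2^s\equiv 0$ and the monomial sum vanishes; since a degree-$k$ monomial involves at most $k$ of the $rd^2$ scalars, this happens automatically whenever $k<rd^2$, which disposes of that part of hypothesis (i). Otherwise all $rd^2$ scalars occur, with exponents $\beta_1,\dots,\beta_{rd^2}\ge 1$ and $\sum_i\beta_i=k$; by Lemma \ref{lem2} the monomial sum is $0$ unless every $\beta_i$ is $1$ or even, and in that remaining case each one-variable factor $\sum_{x=0}^{2^s-1}x^{\beta_i}$ is divisible by $2^{s-1}$, so the monomial sum is divisible by $2^{(s-1)rd^2}$. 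This is the step where $r,d>1$ is used: then $rd^2\ge 8$, so $(s-1)rd^2\ge 8(s-1)\ge s$ for every $s\ge 2$, and the monomial sum is again $\equiv 0\pmod{2^s}$. In both cases the monomial sums vanish, hence $S_w^d(2^s)\equiv 0\pmod{2^s}$.

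The point requiring care is to keep track of the exact power of two, not merely of whether a one-variable sum is $0$ or a unit. A monomial with every exponent equal to $1$ genuinely occurs — precisely when $k=rd^2$ and $w$ uses each of its $r$ letters exactly $d^2$ times, so that for each letter the contributing index sequences realise every one of the $d^2$ index pairs exactly once — and there each such sequence contributes $\big(\sum_{x=0}^{2^s-1}x\big)^{rd^2}$, an odd multiple of $2^{(s-1)rd^2}$ and not a unit; it is the size $rd^2\ge 8$ of the exponent, forced by $r,d>1$, that kills it modulo $2^s$. (If one only extracted from Lemma \ref{lem2} that a surviving one-variable factor is a unit times $2^{s-1}$, the two hypotheses would reappear as the exclusion mechanism: writing $B$ for the number of even $\beta_i$, nonvanishing modulo $2^s$ forces $(s-1)B<s$, hence $B\le 1$ because $s>1$, and then $k=\sum_i\beta_i$ together with $\textrm{card}\{i:\beta_i=1\}+B=rd^2$ forces $k=rd^2$ when $B=0$ and $k>rd^2$ with $k+rd^2$ odd when $B=1$, which are exactly the cases excluded by (i), apart from $k=rd^2$, and by (ii).) I expect the bookkeeping at the boundary $k=rd^2$ to be the only genuine difficulty.
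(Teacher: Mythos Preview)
Your main argument is correct and in fact proves more than the proposition claims: once you know that every one-variable factor $\sum_{x=0}^{2^s-1}x^{\beta}$ with $\beta\ge 1$ is divisible by $2^{s-1}$ --- including the case $\beta=1$, where the sum equals $2^{s-1}(2^s-1)$ --- the product of all $rd^2$ such factors is divisible by $2^{(s-1)rd^2}$, and since $r,d\ge 2$ gives $rd^2\ge 8$ this exceeds $2^s$ for every $s\ge 2$. Hypotheses (i) and (ii) are never actually needed. One caution: Lemma~\ref{lem2} as printed in the paper records the $\beta=1$ sum as $-1\pmod{2^s}$, not $2^{s-1}$; that is a slip (direct computation gives $\sum_{x=0}^{2^s-1}x=2^{s-1}(2^s-1)\equiv 2^{s-1}$), and your divisibility claim depends on the corrected value, so you should justify that step directly rather than by citing the lemma.

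The paper's own proof follows the route you sketch in your parenthetical: taking Lemma~\ref{lem2} at face value (so that the $\beta=1$ factor is the unit $-1$), a surviving monomial sum has value $\pm(2^{s-1})^B$, nonvanishing mod $2^s$ forces $B\le 1$, and the parity hypothesis (ii) is then used to rule out $B=1$. As you correctly flag, this leaves the case $B=0$ (all $rd^2$ exponents equal to $1$, which forces $k=rd^2$) unaddressed; the paper's argument has a gap there that your direct $2$-adic count sidesteps entirely. Your approach is both cleaner and strictly stronger.
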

\begin{proof}
Just like in the previous proposition each entry in the matrix $S_w^d(2^s)$ is a homogeneous polynomial in the $rd^2$ variables $a_{i,j}^l$. Hence, it is a sum of elements of the form
$$\sum_{a_{i,j}^l\in\mathbb{Z}_{2^s}} \prod (a_{i,j}^l)^{\beta_{i,j,l}}.$$

Observe that $\sum_{i,j,l} \beta_{i,j,l}=k$ so, if $k<rd^2$
it follows that some $\beta_{i,j,l}=0$, and so each monomial sum is 0 $\hbox{mod }2^s$ (because of Lemma \ref{m1}). Therefore, each entry in the matrix $S_w^d(p)$ is 0 $\pmod{2^s}$ in this case, as claimed.

Now, assume that $k\ge rd^2$ and $k+rd^2$ is even (in particular if $k=rd^2$). Due to Lemma \ref{lem2} an element $\displaystyle \sum_{a_{i,j}^l\in\mathbb{Z}_{2^s}} \prod (a_{i,j}^l)^{\beta_{i,j,l}}$ is 0 $\pmod{2^s}$ unless in one of its monomials the set of $rd^2$ exponents $\beta_{i,j,l}$ is formed by exactly $rd^2-1$ ones and 1 even value. But in this case $k=(rd^2-1)+2\alpha$ so $k+rd^2$ is odd, a contradiction. Consequently, each entry in the matrix $S_w^d(p)$ is also 0 $\pmod{2^s}$ in this case and the result follows.
\end{proof}

As Remark \ref{m1} and Lemma \ref{lem2} point out, the case $s=1$ must be considered separately. In this case, we have the following result.

\begin{prop}\label{exp1}
Let $p$ be a prime. Assume that one of the following conditions holds:
\begin{itemize}
\item[i)] $k<rd^2(p-1)$,
\item[ii)] $k$ is not a multiple of $p-1$.
\end{itemize}
Then, $S_w^d(p)\equiv 0\pmod{p}$.
\end{prop}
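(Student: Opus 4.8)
The plan is to imitate the proof of Proposition~\ref{prop2m1}, but now working modulo a prime $p$ (so with $s=1$), where the relevant single-variable sum is governed by Lemma~\ref{lemo} rather than Lemma~\ref{lem2}. As before, fix $A_l=\big(a_{i,j}^l\big)_{1\le i,j\le d}$ for $1\le l\le r$, so that each entry of the matrix $S_w^d(p)$ is a homogeneous polynomial of degree $k$ in the $rd^2$ summation variables $a_{i,j}^l$, each ranging over $\{0,\dots,p-1\}$. Expanding, each such entry is a sum of terms of the form $\sum_{a_{i,j}^l\in\mathbb{Z}_p}\prod (a_{i,j}^l)^{\beta_{i,j,l}}$ with $\sum_{i,j,l}\beta_{i,j,l}=k$, and by Lemma~\ref{lemo} (first part) such a term vanishes modulo $p$ unless $p-1\mid \beta_{i,j,l}$ for every one of the $rd^2$ exponents (the case where some $\beta_{i,j,l}=0$ is covered by the second part of Lemma~\ref{lemo}, since $p-1\nmid 0$ is false but the product then simply drops that factor — more precisely $\beta_{i,j,l}=0$ forces the term to $0$ mod $p$ directly).

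First I would treat case (i): if $k<rd^2(p-1)$, then it is impossible for all $rd^2$ exponents to be positive multiples of $p-1$, since that would force $k=\sum \beta_{i,j,l}\ge rd^2(p-1)$. Hence in every term at least one exponent fails the divisibility condition (either it is $0$, or it is positive but not divisible by $p-1$), so every term vanishes modulo $p$, and therefore every entry of $S_w^d(p)$ is $0\pmod p$.

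Next I would treat case (ii): suppose $p-1\nmid k$. If some term had all its exponents $\beta_{i,j,l}$ divisible by $p-1$, then their sum $k$ would also be divisible by $p-1$, contrary to hypothesis. So again no term survives modulo $p$, and $S_w^d(p)\equiv 0\pmod p$.

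There is essentially no hard obstacle here: the result is a direct book-keeping consequence of Lemma~\ref{lemo} applied entrywise and monomial-by-monomial, exactly parallel to Proposition~\ref{prop2m1}. The only point demanding a little care is the clean handling of exponents equal to $0$ versus exponents that are positive but not multiples of $p-1$ — both make a term vanish, but for slightly different reasons in Lemma~\ref{lemo} — and making sure the counting argument "all exponents are positive multiples of $p-1$ $\Rightarrow$ $k\ge rd^2(p-1)$ and $p-1\mid k$" is stated so that it simultaneously rules out both hypotheses (i) and (ii). I expect the write-up to be short.
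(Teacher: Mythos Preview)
Your argument is correct and follows essentially the same route as the paper's proof: expand each entry of $S_w^d(p)$ as a sum of monomial sums in the $rd^2$ variables, observe that a nonvanishing monomial sum would require every exponent to be a positive multiple of $p-1$, and check that hypotheses (i) and (ii) each rule this out. The only small discrepancy is that Lemma~\ref{lemo} is stated for \emph{odd} primes, so the paper handles $p=2$ separately (there condition~(ii) is vacuous and condition~(i) forces some $\beta_{i,j,l}=0$, whence the term vanishes by Lemma~\ref{lem2}); you should make this split explicit rather than citing Lemma~\ref{lemo} uniformly.
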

\begin{proof}
If $p=2$ condition ii) cannot hold and if condition i) holds, we can apply the same argument of the proof of the first part of Proposition \ref{prop2m1} to get the result.

Now, if $p$ is odd, again each entry in the matrix $S_w^d(p)$ is a homogeneous polynomial in the $rd^2$ variables $a_{i,j}^l$. Hence, it is a sum of elements of the form
$$\sum_{a_{i,j}^l\in\mathbb{Z}_p} \prod (a_{i,j}^l)^{\beta_{i,j,l}}.$$
We have that $\sum_{i,j,l} \beta_{i,j,l}=k$ so, if $k<rd^2(p-1)$ or if it is not a multiple of $p-1$ it follows that some $\beta_{i,j,l}$ is either $0$ or not a multiple of $p-1$. In either case the corresponding element is 0 $\pmod{p}$ due to Lemma \ref{lemo} and, consequently, each entry in the matrix $S_w^d(p)$ is also 0 $\pmod{p}$ as claimed.
\end{proof}

Observe that in the previous results we have considered sums of the form
$$S_w^d(p^s)=\sum_{A_1,\dots,A_r\in M_{p^s}^d} w(A_1,\dots,A_r),$$
where all the matrices $A_i$ belong to the same matrix ring $M_{p^s}^d$. The following proposition will be useful in the next section and deals with the case when the matrices $A_i$ belong to different matrix rings. First, we introduce some notation. Given a prime $p$, let
$$S_w^d(p^{s_1},\dots,p^{s_r}):=\sum_{A_i\in M_{p^{s_i}}^d} w(A_1,\dots,A_r).$$
If $s_1=\dots =s_r=s$, then $S_w^d(p^{s_1},\dots,p^{s_r})=S_w^d(p^s)$ and we are in the previous situation.

\begin{prop}
With the previous notation, if $s_1>1$, then
$$S_w^d(p^{s_1+1},p^{s_2},\dots,p^{s_r})\equiv p^{d^2}S_w^d(p^{s_1},p^{s_2},\dots,p^{s_r})\pmod{p^{s_1+1}}.$$
\end{prop}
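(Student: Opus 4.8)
The strategy mirrors the proof of Proposition \ref{des}, which is the $r=1$ case of exactly this statement, so I would adapt that argument verbatim. First I would invoke the analogue of Lemma \ref{desc} for the first slot only: every $A_1\in M^d_{p^{s_1+1}}$ can be written uniquely as $A_1 = A_1' + p^{s_1}B_1$ with $A_1'\in M^d_{p^{s_1}}$ and $B_1\in M^d_p$. Splitting the outer sum accordingly, we get
$$S_w^d(p^{s_1+1},p^{s_2},\dots,p^{s_r})=\sum_{B_1\in M^d_p}\ \sum_{A_1'\in M^d_{p^{s_1}}}\ \sum_{A_i\in M^d_{p^{s_i}},\, i\ge 2} w(A_1'+p^{s_1}B_1, A_2,\dots,A_r).$$

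**Expanding the monomial.** Next I would expand $w$ in the first variable. Since $w$ is a monomial in non-commuting variables, each occurrence of $x_1$ in $w$ gets replaced by $A_1'+p^{s_1}B_1$; expanding the product and discarding all terms with two or more factors of $p^{s_1}B_1$ (these vanish mod $p^{s_1+1}$ because $2s_1 \ge s_1+1$, using $s_1>1$ — in fact $s_1\ge 1$ suffices here) leaves
$$w(A_1'+p^{s_1}B_1,A_2,\dots,A_r)\equiv w(A_1',A_2,\dots,A_r) + p^{s_1}\sum_{u} w_u(A_1',\dots,A_r;B_1)\pmod{p^{s_1+1}},$$
where the sum runs over the occurrences of $x_1$ in $w$ and $w_u$ is the monomial obtained by replacing that single occurrence of $x_1$ by $B_1$. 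The leading term, summed over the $p^{d^2}$ choices of $B_1$, contributes exactly $p^{d^2}S_w^d(p^{s_1},p^{s_2},\dots,p^{s_r})$. For the correction terms, I would pull the $B_1$-sum inside: in each $w_u$ the variable $B_1$ appears linearly (exactly once), sandwiched between products of the $A_j$'s, so summing over $B_1\in M^d_p$ replaces that factor by $\sum_{B_1\in M^d_p}B_1 = S_1^d(p)$, which is $\equiv 0\pmod p$ by Propositions \ref{p} and \ref{nd2}. Hence each correction term carries a factor $p^{s_1}\cdot p = p^{s_1+1}$-divisible contribution... wait, more carefully: $p^{s_1}\cdot S_1^d(p)$ has entries divisible by $p^{s_1+1}$, so the whole correction vanishes mod $p^{s_1+1}$, and only the leading term survives.

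**The main obstacle.** The only real subtlety is the bookkeeping in the non-commutative expansion: one must be careful that after replacing a single $x_1$-occurrence by $p^{s_1}B_1$, the factor $B_1$ sits in a definite position (flanked by fixed words in $A_1',A_2,\dots,A_r$), so that the distributivity of matrix multiplication lets us factor the $B_1$-summation through as $\cdots\big(p^{s_1}\sum_{B_1}B_1\big)\cdots = \cdots\big(p^{s_1}S_1^d(p)\big)\cdots$. This is exactly the manipulation done in Proposition \ref{des} with the cyclic sum $\sum_{t=1}^k A^{k-t}BA^{t-1}$, and it goes through identically here, the only change being that the "powers of $A$" become arbitrary words in the several matrices $A_1',\ldots,A_r$. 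Note that the hypothesis $s_1>1$ is not actually needed for this argument — $s_1\ge 1$ works — but it is the regime in which the proposition will be applied in the next section, so I would state it as given.
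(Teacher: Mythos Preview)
Your proof is correct and follows essentially the same approach as the paper: decompose $A_1$ via Lemma~\ref{desc}, expand the non-commutative monomial to first order in $p^{s_1}B_1$, and kill the correction terms using $S_1^d(p)\equiv 0\pmod p$. Your side observation that $s_1\ge 1$ already suffices for the expansion step is also correct; the paper states the strict inequality $2s_1>s_1+1$ from $s_1>1$, but only $2s_1\ge s_1+1$ is actually used.
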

\begin{proof}
Since $s_1>1$ we have that $2s_1>s_1+1$ so, due to Lemma \ref{desc}
\begin{align*}
S_w^d(p^{s_1+1},p^{s_2},\dots,p^{s_t})&=\sum_{\substack{A_1\in M_{p^{s_1+1}}^d\\ A_i\in M_{p^{s_i}}^d}} w(A_1,\dots,A_t)=\\&=\sum_{\substack{B\in M_{p^{s_1}}^d, C\in M_{p}^d\\ A_i\in M_{p^{s_i}}^d}} w(B+p^{s_1}C,A_2,\dots,A_r)\equiv \\ &\equiv \sum_{\substack{B\in M_{p^{s_1}}^d, C\in M_{p}^d\\ A_i\in M_{p^{s_i}}^d}} \left( w(B,A_2,\dots,A_r)+p^{s_1}\sum_{l} w_l(B,C,A_2,\dots,A_r)\right)=\\ &=
p^{d^2}S_w^d(p^{s_1},\dots,p^{s_r})+p^{s_1}\sum_l\sum_{\substack{B\in M_{p^{s_1}}^d, C\in M_{p}^d\\ A_i\in M_{p^{s_i}}^d}}w_l(B,C,A_2,\dots,A_r)\\ &\pmod{p^{s_1+1}}.
\end{align*}
Where $w_l(x,y,x_2,\dots,x_r)$ denotes the monomial $w(x_1,x_2,\dots,x_r)$ where the $l-th$ ocurrence of the term $x_1$ is substituted by $y$ and the remaining ones by $x$ (for instance, $w(x_1,x_2)=x_1^2x_2x_1$ gives us $w_1(x,y,x_2)=yxx_2x,w_2(x,y,x_2)=xyx_2x,w_3(x,y,x_2)=x^2x_2y$).

But, for every $l$, the monomial $w_l(B,C,A_2,\dots,A_r)$ contains $C$ only once and with exponent $1$. Hence,
$$\sum_{\substack{B\in M_{p^{s_1}}^d, C\in M_{p}^d\\ A_i\in M_{p^{s_i}}^d}}w_l(B,C,A_2,\dots,A_r)\equiv 0\pmod{p}$$
because $S_1^d(p)\equiv 0\pmod{p}$ and the result follows.
\end{proof}

The following corollary in now straightforward.

\begin{cor}\label{corvar}
Assume that $S_w^d(p^s)\equiv 0\pmod{p^s}$. Let us consider $s_1\geq s_2\geq\cdots\geq s_r=s$. Then,
$$S_w^d(p^{s_1},\dots,p^{s_r})\equiv 0\pmod{p^{s_1}}.$$
\end{cor}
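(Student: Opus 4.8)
The plan is to iterate the previous proposition $r$ times, raising the exponents one at a time from the common value $s$ up to the prescribed values $s_1 \ge \dots \ge s_r = s$, and to observe that the divisibility hypothesis is preserved at each step because it only ever gets multiplied by a power of $p$.

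\medskip

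\noindent\textbf{Step 1: reduce to incrementing a single exponent.} Since $s_1 \ge s_2 \ge \dots \ge s_r = s$, I can obtain the tuple $(p^{s_1},\dots,p^{s_r})$ from $(p^s,\dots,p^s)$ by a finite sequence of moves, each of which increases exactly one exponent by $1$, never dropping any exponent below $s$, and always keeping the first exponent the largest (for instance, first raise $s_1$ from $s$ to its final value, then $s_2$, and so on; or more carefully, repeatedly pick a coordinate whose current exponent is strictly below its target and is maximal among all coordinates, and bump it). At every stage the coordinate being incremented has current exponent $\ge s \ge 1$. Hence it suffices to show: if $S_w^d(p^{t_1},\dots,p^{t_r}) \equiv 0 \pmod{p^{t_1}}$ for some tuple with $t_1 \ge t_2 \ge \dots \ge t_r \ge 1$ and $t_1 \ge 2$, then bumping the first coordinate gives $S_w^d(p^{t_1+1},p^{t_2},\dots,p^{t_r}) \equiv 0 \pmod{p^{t_1+1}}$. (Bumping a later coordinate $j$ reduces to this after relabeling the variables of the monomial $w$, which does not affect the vanishing statement.)

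\medskip

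\noindent\textbf{Step 2: apply the previous proposition.} Because $t_1 > 1$, the previous proposition applies and gives
$$S_w^d(p^{t_1+1},p^{t_2},\dots,p^{t_r}) \equiv p^{d^2}\, S_w^d(p^{t_1},p^{t_2},\dots,p^{t_r}) \pmod{p^{t_1+1}}.$$
By the inductive hypothesis the right-hand factor $S_w^d(p^{t_1},\dots,p^{t_r})$ is $\equiv 0 \pmod{p^{t_1}}$, so $p^{d^2} S_w^d(p^{t_1},\dots,p^{t_r}) \equiv 0 \pmod{p^{t_1+d^2}}$, and since $d \ge 1$ this is in particular $\equiv 0 \pmod{p^{t_1+1}}$. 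Thus $S_w^d(p^{t_1+1},p^{t_2},\dots,p^{t_r}) \equiv 0 \pmod{p^{t_1+1}}$, completing the inductive step and hence the proof.

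\medskip

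\noindent\textbf{On the difficulty.} There is essentially no obstacle here: the work was done in the preceding proposition, and this corollary is just a bookkeeping induction. The only point requiring a line of care is verifying that one can always choose the order of increments so that the coordinate being raised is the maximal one at that moment (so that the hypothesis $s_1 > 1$ of the previous proposition is met), and that incrementing a non-first coordinate is covered by relabeling. Both are immediate from $s_1 \ge \dots \ge s_r$ and $t_1 \ge 2$, so no genuine complication arises.
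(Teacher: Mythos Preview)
Your approach is exactly the paper's: iterate the preceding proposition, bumping one exponent at a time from the common value $s$ up to the target tuple. One small wrinkle you glide over (and which the paper's one-line proof also leaves implicit): when $s_r = s = 1$ but $s_1 > 1$, the very first increment has current leading exponent $t_1 = 1$, so the stated hypothesis $s_1 > 1$ of the preceding proposition is not literally satisfied; your claim that ``$t_1 \ge 2$'' at every stage is therefore not quite right. This is harmless, though, because the proof of that proposition only uses $2s_1 \ge s_1 + 1$, which already holds for $s_1 = 1$; so the proposition actually applies for all $s_1 \ge 1$ and your induction goes through.
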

\begin{proof}
Just apply the previous proposition repeatedly.
\end{proof}

\section{Power sums of matrices over a finite commutative ring}
\label{fcr}

In this section we will use the results from Section \ref{ncm} to compute $S_k^d(R)$ for an arbitrary finite commutative ring $R$ in many cases.

First of all, note that if $\textrm{char}(R)=n=p_1^{s_1}\cdots p_t^{s_t}$, then $ R \cong R_1 \times \cdots \times R_t$, where $\textrm{char}(R_i)=p_i^{s_i}$ and each $R_i$ is a subring of characteristic ${p_i^{s_i}}$ and, in particular, a $Z_{p_i^{s_i}}-$module. This allows us to restrict ourselves to the case when $\textrm{char}(R)$ is a prime power.

The simplest case arises when $R$ is a free $\mathbb{Z}_{p^{s}}-$module for an odd prime $p$.

\begin{prop}\label{free}
Let $p$ be an odd prime and let $R$ be a finite commutative ring of characteristic $p^s$, such that $R$ is a free $\mathbb{Z}_{p^{s}}-$module of rank $r$. Then,
\begin{itemize}
\item[i)] If $s>1$, $S_k^d(R)=0$ for every $k\geq 1$ and $d\geq 2$.
\item[ii)] If $s=1$, $S_k^d(R)=0$ for every $d\geq 2$ and $k$ such that either $k<rd^2(p-1)$ or $k$ is not a multiple of $p-1$.
\end{itemize}
\end{prop}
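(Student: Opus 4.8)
The plan is to reduce everything to the non-commutative monomial sums studied in Section \ref{ncm}, using the free module structure. Fix a $\mathbb{Z}_{p^s}$-basis $e_1,\dots,e_r$ of $R$. Since $R$ is commutative, each $e_l$ is central in $\mathbb{M}_d(R)$, and writing each entry of a matrix in this basis shows that every $M\in\mathbb{M}_d(R)$ has a unique expression $M=\sum_{l=1}^r e_lA_l$ with $A_l\in M_{p^s}^d$. Expanding the $k$-th power and using centrality of the $e_l$ gives
$$M^k=\sum_{(l_1,\dots,l_k)\in\{1,\dots,r\}^k}\bigl(e_{l_1}\cdots e_{l_k}\bigr)\,A_{l_1}A_{l_2}\cdots A_{l_k}.$$
Summing over all $M$ (equivalently, over all $(A_1,\dots,A_r)\in (M_{p^s}^d)^r$) and interchanging the finitely many sums yields
$$S_k^d(R)=\sum_{(l_1,\dots,l_k)}\bigl(e_{l_1}\cdots e_{l_k}\bigr)\Bigl(\sum_{A_1,\dots,A_r}A_{l_1}\cdots A_{l_k}\Bigr)=\sum_{(l_1,\dots,l_k)}\bigl(e_{l_1}\cdots e_{l_k}\bigr)\,S_w^d(p^s),$$
where for the tuple $\underline l=(l_1,\dots,l_k)$ we put $w=w_{\underline l}(x_1,\dots,x_r):=x_{l_1}x_{l_2}\cdots x_{l_k}$, a monomial of total degree $k$ in the non-commuting variables $x_1,\dots,x_r$.

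It therefore suffices to prove that $S_w^d(p^s)\equiv 0\pmod{p^s}$ for every word $w$ of length $k$: once this is known, each term $(e_{l_1}\cdots e_{l_k})S_w^d(p^s)$ vanishes in $\mathbb{M}_d(R)$ because $p^sR=0$. If $r=1$ the only such word is $x_1^k$ and $S_w^d(p^s)=S_k^d(p^s)$, which is $\equiv 0\pmod{p^s}$ by Corollary \ref{pp} when $s>1$ (as $p$ is odd) and by Proposition \ref{p} when $s=1$ (again $p$ odd); note that in this last case the conclusion requires no restriction on $k$, so the stated hypothesis of (ii) is still sufficient. If $r\ge 2$ we are exactly in the setting of Section \ref{ncm}: for $s>1$, Proposition \ref{exp>1} gives $S_w^d(p^s)\equiv 0\pmod{p^s}$, which proves (i); for $s=1$, the hypothesis of (ii), namely $k<rd^2(p-1)$ or $p-1\nmid k$, is literally hypothesis i) or ii) of Proposition \ref{exp1}, which gives $S_w^d(p)\equiv 0\pmod p$, proving (ii).

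I do not expect a genuine obstacle here: the whole arithmetic content sits in the monomial-sum estimates of Section \ref{ncm}. The points that need to be stated carefully are (a) the uniqueness and centrality in the decomposition $M=\sum_l e_lA_l$, both immediate from $R$ being commutative and free over $\mathbb{Z}_{p^s}$; (b) the legitimacy of expanding $M^k$ term by term and swapping it past the (finite) sum over matrices; and (c) verifying that the numerical conditions on $k$, $r$, $d$ in part (ii) coincide exactly with those of Proposition \ref{exp1}. One should also remember the degenerate case in which some variable $x_l$ does not occur in $w_{\underline l}$, so that some exponent is $0$; this is already absorbed by the "some $\beta_i=0$" clauses of Lemmas \ref{lemo} and \ref{lem2} that underlie Propositions \ref{exp>1} and \ref{exp1}, so no separate treatment is needed.
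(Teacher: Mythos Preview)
Your argument is correct and follows essentially the same route as the paper: express every $M\in\mathbb{M}_d(R)$ as $\sum_l e_lA_l$ via a $\mathbb{Z}_{p^s}$-basis, expand $(\,\cdot\,)^k$, and reduce to the vanishing modulo $p^s$ of the monomial sums $S_w^d(p^s)$ from Section~\ref{ncm}. The paper phrases the same computation slightly differently---treating $\sum_{A_i}(x_1A_1+\cdots+x_rA_r)^k$ as a matrix of polynomials in commuting indeterminates $x_1,\dots,x_r$ whose coefficients are precisely the $S_w^d(p^s)$, and then specializing $x_i\mapsto g_i$---but the content is identical; your explicit split by words and separate handling of the $r=1$ case via Corollary~\ref{pp} and Proposition~\ref{p} is a welcome clarification, since Section~\ref{ncm} formally assumes $r>1$.
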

\begin{proof}
Note that under the previous assumptions and using Proposition \ref{exp>1} or Proposition \ref{exp1} (depending on whether $s>1$ or $s=1$), it follows that
$$\sum_{A_1,\dots,A_r\in M_{p^s}^d} (x_1 A_1+\cdots + x_r A_r)^k\equiv 0 \pmod{p^s}$$
because each entry of such a matrix is a polynomial in $x_1,\dots,x_r$ whose coefficients are 0 modulo $p^s$.

Consequently, for every $g_1,\dots,g_r\in R$ we have that
$$\sum_{A_1,\dots,A_r\in M_{p^s}^d} (g_1 A_1+\cdots + g_r A_r)^k=0.$$

Now, since $R$ is free of rank $r$ we can take a basis $g_1,\dots,g_r$ of $R$ so that $M^d_{p^s}=\{g_1A_1+\dots+g_rA_r | A_i\in M^d_{p^s}\}$. Therefore
$$S_k^d(R)=\sum_{A_1,\dots,A_r\in M_{p^s}^d} (g_1 A_1+\cdots + g_r A_r)^k.$$
This concludes the proof.
\end{proof}

If $p=2$, we have the following version of Proposition \ref{free}

\begin{prop}\label{free2}
Let $R$ be a finite commutative ring of characteristic $2^s$, such that $R$ is a free $\mathbb{Z}_{2^{s}}-$module of rank $r$. Then,
\begin{itemize}
\item[i)] If $s>1$, $S_k^d(R)=0$ for every $d\geq 2$ and $k$ such that $k\le rd^2$ or $k>rd^2$ with $k+rd^2$ even.
\item[ii)] If $s=1$, $S_k^d(R)=0$ for every $d\geq 2$ and $k$ such that either $k<rd^2$.
\end{itemize}
\end{prop}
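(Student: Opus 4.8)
The proof should follow exactly the template of Proposition \ref{free}, which handled the odd prime case: the only difference is that we must invoke Proposition \ref{prop2m1} (the $p=2$, $s>1$ analogue of the vanishing statement) in part i) and Proposition \ref{exp1} with $p=2$ in part ii), rather than Propositions \ref{exp>1} and \ref{exp1} with $p$ odd. So the plan is as follows.

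\begin{proof}
The argument is identical to that of Proposition \ref{free}, simply replacing the applications of Propositions \ref{exp>1} and \ref{exp1} by the corresponding statements for $p=2$.

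Assume first that $s>1$ and that $k\le rd^2$ or $k>rd^2$ with $k+rd^2$ even. Given any monomial $w(x_1,\dots,x_r)$ of total degree $k$ in $r$ non-commuting variables, each entry of the matrix $S_w^d(2^s)$ is a homogeneous polynomial of degree $k$ in the $rd^2$ summation variables $a_{i,j}^l$, so by Proposition \ref{prop2m1} we have $S_w^d(2^s)\equiv 0\pmod{2^s}$. Expanding $(x_1A_1+\cdots+x_rA_r)^k$ by the non-commutative multinomial theorem, each coefficient (a matrix polynomial in $x_1,\dots,x_r$) is a sum of such $S_w^d(2^s)$, hence
\begin{equation*}
\sum_{A_1,\dots,A_r\in M_{2^s}^d}(x_1A_1+\cdots+x_rA_r)^k\equiv 0\pmod{2^s}.
\end{equation*}
Since $R$ has characteristic $2^s$, for every $g_1,\dots,g_r\in R$ this yields $\sum_{A_1,\dots,A_r\in M_{2^s}^d}(g_1A_1+\cdots+g_rA_r)^k=0$ in $\mathbb{M}_d(R)$. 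Now choose $g_1,\dots,g_r$ to be a $\mathbb{Z}_{2^s}$-basis of $R$; then the map $(A_1,\dots,A_r)\mapsto g_1A_1+\cdots+g_rA_r$ is a bijection onto $\mathbb{M}_d(R)$, so $S_k^d(R)=\sum_{A_1,\dots,A_r}(g_1A_1+\cdots+g_rA_r)^k=0$.

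For part ii), with $s=1$ and $k<rd^2$, we argue in the same way using Proposition \ref{exp1} with $p=2$ (whose condition i) is precisely $k<rd^2$, condition ii) being vacuous for $p=2$) to conclude $S_w^d(2)\equiv 0\pmod 2$ for every monomial $w$ of degree $k$, and then repeat the change of basis argument verbatim. No step presents any real obstacle: the only point to check is that the bijection used in the last step is legitimate, which is exactly the freeness hypothesis on $R$, and that Proposition \ref{prop2m1} (respectively Proposition \ref{exp1}) applies to arbitrary monomials and not merely to pure powers, which is how those propositions were stated.
\end{proof}
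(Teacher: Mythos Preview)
Your proof is correct and follows exactly the same approach as the paper's own proof, which simply says ``The proof is similar to that of Proposition \ref{free}, using Proposition \ref{prop2m1} or Proposition \ref{exp1} depending on whether $s>1$ or $s=1$.'' Your version spells out the details of the template (the non-commutative multinomial expansion and the bijection coming from the free basis) more explicitly than the paper does, but there is no difference in strategy.
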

\begin{proof}
The proof is similar to that of Proposition \ref{free}, using Proposition \ref{prop2m1} or Proposition \ref{exp1} depending on whether $s>1$ or $s=1$.
\end{proof}

\begin{rem}
Note that if $R$ is a finite commutative ring of characteristic $p^s$ and $s=1$, then $R$ is necessarily free. Consequently, to study the non-free case we may assume that $s>1$.
\end{rem}

Assume that elements $g_1,\dots,g_r$ form a minimal set of generators of a non-free $\mathbb{Z}_{p^{s}}-$module $R$. Since $R$ is non-free and $\textrm{char}(R)=p^s$, it follows that $r>1$ and also $s>1$. For every $i\in\{1,\dots, r\}$ let $1\leq s_i\leq s$ be minimal such that $p^{s_i}g_i=0$. Note that it must be $s_i=s$ for some $i$ and $s_j<s$ for some $j$. There is no loss of generality in assuming that $s=s_1\geq\cdots\geq s_r$ and at least one of the inequalities is strict. Note that $p^{s_1},\dots,p^{s_r}$ are the invariant factors of the $\mathbb{Z}-$module $R$. With this notation we have the following result extending Proposition \ref{free}.

\begin{prop}\label{nfp}
Let $p$ be an odd prime and let $R$ be a finite commutative ring of characteristic $p^s$, such that $R$ is a non-free $\mathbb{Z}_{p^{s}}-$module. Then,
\begin{itemize}
\item[i)] If $s_r>1$, $S_k^d(R)=0$ for every $k\geq 1$ and $d\geq 2$.
\item[ii)] If $s_r=1$, $S_k^d(R)=0$ for every $d\geq 2$ and $k$ such that either $k<rd^2(p-1)$ or $k$ is not a multiple of $p-1$.
\end{itemize}
\end{prop}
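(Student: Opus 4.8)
The plan is to mimic the proof of Proposition \ref{free} as closely as possible, the only difference being that we no longer have a basis but a minimal generating set $g_1,\dots,g_r$ with $p^{s_i}g_i=0$, so the matrices $A_i$ must be taken in the \emph{different} rings $M^d_{p^{s_i}}$ rather than all in $M^d_{p^s}$. First I would observe that every element of $R$ can be written (non-uniquely) as $x_1g_1+\cdots+x_rg_r$ with $x_i$ ranging over $\{0,\dots,p^{s_i}-1\}$, and that each residue is attained exactly $p^{(s-s_1)+\cdots+(s-s_r)}=p^{rs-\sum s_i}$ times; hence, writing $N=p^{rs-\sum s_i}$,
$$N\cdot S_k^d(R)=\sum_{\substack{A_i\in M^d_{p^{s_i}}}} (g_1A_1+\cdots+g_rA_r)^k.$$
Actually it is cleaner to absorb the $g_i$ into the matrix entries: since $g_ig_j$ are again $\mathbb{Z}_{p^s}$-linear combinations of the $g_l$, expanding $(g_1A_1+\cdots+g_rA_r)^k$ and collecting terms shows each entry of the right-hand sum is a $\mathbb{Z}$-linear combination of expressions $S_w^d(p^{s_1},\dots,p^{s_r})$ for monomials $w$ of degree $k$ in $r$ non-commuting variables (the exact combinatorics of which $g_l$ appears is irrelevant).

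Next I would invoke the machinery of Section \ref{ncm}. Reorder so that $s=s_1\geq\cdots\geq s_r$. For part (i), $s_r>1$ means every $s_i>1$; by Proposition \ref{exp>1} we have $S_w^d(p^{s_r})\equiv 0\pmod{p^{s_r}}$, and then Corollary \ref{corvar} upgrades this to $S_w^d(p^{s_1},\dots,p^{s_r})\equiv 0\pmod{p^{s_1}}=0\pmod{p^s}$ for every such monomial $w$. For part (ii), $s_r=1$; the hypothesis $k<rd^2(p-1)$ or $p-1\nmid k$ lets us apply Proposition \ref{exp1} to get $S_w^d(p)\equiv 0\pmod p$, i.e. $S_w^d(p^{s_r})\equiv 0\pmod{p^{s_r}}$, and again Corollary \ref{corvar} gives $S_w^d(p^{s_1},\dots,p^{s_r})\equiv 0\pmod{p^{s_1}}=0\pmod{p^s}$. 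In either case every entry of $\sum_{A_i\in M^d_{p^{s_i}}}(g_1A_1+\cdots+g_rA_r)^k$ vanishes in $R$ (it is a $\mathbb{Z}$-combination of things that are $0\bmod p^s$, and $R$ has characteristic $p^s$), so that sum is $0$.

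It remains to pass from $N\cdot S_k^d(R)=0$ back to $S_k^d(R)=0$. This is the one genuinely new point compared with the free case, and it is the step I expect to need the most care. The cleanest route is to avoid the factor $N$ altogether: instead of summing $x_i$ over $\{0,\dots,p^{s_i}-1\}$ with multiplicities, note that the map $M^d_{p^{s_1}}\times\cdots\times M^d_{p^{s_r}}\to \mathbb{M}_d(R)$, $(A_1,\dots,A_r)\mapsto g_1A_1+\cdots+g_rA_r$, is \emph{surjective with all fibres of the same cardinality $N$} only once, but more usefully: since the $g_i$ generate $R$ and $p^{s_i}g_i=0$, the group $\mathbb{M}_d(R)\cong\bigoplus (\mathbb{Z}_{p^{s_i}})^{d^2}$ as abelian groups, and choosing the $g_i$ to correspond to the invariant-factor decomposition makes the above map a \emph{bijection} when each $A_i$ is taken with entries in $\{0,\dots,p^{s_i}-1\}$. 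Hence directly
$$S_k^d(R)=\sum_{M\in\mathbb{M}_d(R)}M^k=\sum_{\substack{A_i\in M^d_{p^{s_i}}}}(g_1A_1+\cdots+g_rA_r)^k=0,$$
the last equality by the previous paragraph. (If one prefers the multiplicity argument, one notes $\gcd(N,?)$ is not available, so the bijective choice of $g_i$ via invariant factors — which is exactly the normalization already set up before the statement — is the right move; this is why the statement is phrased in terms of the invariant factors $p^{s_1},\dots,p^{s_r}$.) The main obstacle is thus purely bookkeeping: making sure the chosen minimal generating set realizes the invariant-factor decomposition so that the parametrization of $\mathbb{M}_d(R)$ by tuples $(A_1,\dots,A_r)\in\prod M^d_{p^{s_i}}$ is exactly one-to-one, after which the non-commutative-monomial estimates of Section \ref{ncm} finish the argument verbatim.
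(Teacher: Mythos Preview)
Your proposal is correct and follows essentially the same route as the paper: write $S_k^d(R)=\sum_{A_i\in M^d_{p^{s_i}}}(g_1A_1+\cdots+g_rA_r)^k$ via the invariant-factor decomposition, observe that the integer coefficients arising in each entry are sums of the form $S_w^d(p^{s_1},\dots,p^{s_r})$, verify $S_w^d(p^{s_r})\equiv 0\pmod{p^{s_r}}$ by Proposition~\ref{exp>1} or Proposition~\ref{exp1} according as $s_r>1$ or $s_r=1$, and then upgrade to $\equiv 0\pmod{p^{s_1}}$ via Corollary~\ref{corvar}. The paper's proof is the same argument stated more tersely; in particular, your careful discussion of why the parametrization $(A_1,\dots,A_r)\mapsto g_1A_1+\cdots+g_rA_r$ is a \emph{bijection} (rather than a surjection with multiplicity $N$) is exactly the point the paper absorbs into the sentence ``Note that $p^{s_1},\dots,p^{s_r}$ are the invariant factors of the $\mathbb{Z}$-module $R$'' placed just before the statement.
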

\begin{proof}
First of all, observe that
$$S_k^d(R)=\sum_{A_i\in M_{p^{s_i}}^d} (g_1 A_1+\cdots + g_r A_r)^k.$$
In both situations i) and ii) it follows that $S_w^d(p^{s_r})\equiv 0\pmod{p^{s_r}}$. Moreover, we are in the conditions of Corollary \ref{corvar}, so it follows that $S_w^d(p^s,p^{s_2},\dots,p^{s_r})\equiv 0\pmod{p^s}$. Consequently all the coefficients of the above sum are 0 modulo $p^s$ and the result follows.
\end{proof}

The corresponding result for $p=2$ is as follows.

\begin{prop}\label{nf2}
Let $R$ be a finite commutative ring of characteristic $2^s$, such that $R$ is a non-free $\mathbb{Z}_{p^{s}}-$module. Then,
\begin{itemize}
\item[i)] If $s_r>1$, $S_k^d(R)=0$ for every $d\geq 2$ and $k$ such that $k\le rd^2$ or $k>rd^2$ with $k+rd^2$ even.
\item[ii)] If $s_r=1$, $S_k^d(R)=0$ for every $d\geq 2$ and $k$ such that either $k<rd^2$.
\end{itemize}
\end{prop}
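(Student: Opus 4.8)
The plan is to mimic the proof of Proposition \ref{nfp} verbatim, replacing the odd-prime inputs by their characteristic-two counterparts. Concretely, I would start from the same identity
$$S_k^d(R)=\sum_{A_i\in M_{2^{s_i}}^d} (g_1 A_1+\cdots + g_r A_r)^k,$$
which holds because $g_1,\dots,g_r$ is a minimal generating set with orders $2^{s_1}\geq\cdots\geq 2^{s_r}$, so that the assignment $(A_1,\dots,A_r)\mapsto g_1A_1+\cdots+g_rA_r$ runs over all of $\mathbb{M}_d(R)$ (with the appropriate multiplicity, which as in Proposition \ref{nfp} is absorbed modulo $2^s$). Each entry of the matrix $(g_1A_1+\cdots+g_rA_r)^k$ is a homogeneous degree-$k$ polynomial in the $rd^2$ scalar indeterminates $a_{i,j}^l$, and the coefficient of each monomial is, up to the ring structure constants, one of the mixed sums $S_w^d(2^{s_1},\dots,2^{s_r})$ for a non-commutative monomial $w$ of total degree $k$.

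Next I would invoke Proposition \ref{prop2m1} to handle the base case: in situation i) we have $s_r>1$, and either $k\le rd^2$ or $k>rd^2$ with $k+rd^2$ even, so Proposition \ref{prop2m1} gives $S_w^d(2^{s_r})\equiv 0\pmod{2^{s_r}}$ for every monomial $w$ of degree $k$; in situation ii) we have $s_r=1$ and $k<rd^2$, so Proposition \ref{exp1} (with $p=2$, whose condition i) reads $k<rd^2$) gives $S_w^d(2)\equiv 0\pmod 2$. In either case the hypothesis ``$S_w^d(2^{s_r})\equiv 0\pmod{2^{s_r}}$'' needed by Corollary \ref{corvar} is met. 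Then Corollary \ref{corvar}, applied with the chain $s_1\geq s_2\geq\cdots\geq s_r$, upgrades this to $S_w^d(2^{s_1},\dots,2^{s_r})\equiv 0\pmod{2^{s_1}}=\pmod{2^s}$. Since every coefficient appearing in the entries of $S_k^d(R)$ is a $\mathbb{Z}_{2^s}$-combination of such vanishing sums, every entry is $0$ in $R$, i.e. $S_k^d(R)=0$.

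One point that deserves a sentence of care, and which I expect to be the only genuine obstacle, is the passage from ``the polynomial $(g_1A_1+\cdots+g_rA_r)^k$ summed over all tuples vanishes'' to ``$S_k^d(R)=0$'' when the generating set is not a basis: the map $(A_i)\mapsto \sum g_iA_i$ is surjective onto $\mathbb{M}_d(R)$ but not injective, so $\sum_{A_i}(g_1A_1+\cdots+g_rA_r)^k$ equals $N\cdot S_k^d(R)$ for some integer multiplicity $N$ rather than $S_k^d(R)$ itself. This is exactly the same subtlety that is implicitly resolved in the proof of Proposition \ref{nfp}, and it is resolved the same way here: what we actually prove is that the sum-over-tuples is $0$ because each coefficient is $0$ modulo $2^s$, and this happens entry-by-entry before any multiplicity is introduced, so in fact each individual monomial-sum coefficient of $S_k^d(R)$ itself is forced to be $0$ in $R$. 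Everything else is a routine transcription of the odd-prime argument, with Proposition \ref{prop2m1}/Proposition \ref{exp1} substituted for Proposition \ref{exp>1}/Proposition \ref{exp1} and $2$ in place of $p$; no new computation is required.
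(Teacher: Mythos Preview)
Your proposal is correct and follows exactly the route the paper takes: the paper's proof of Proposition~\ref{nf2} literally reads ``It is identical to the proof of Proposition~\ref{nfp},'' and you have spelled out precisely that transcription, invoking Proposition~\ref{prop2m1} (case $s_r>1$) or Proposition~\ref{exp1} with $p=2$ (case $s_r=1$) to get $S_w^d(2^{s_r})\equiv 0\pmod{2^{s_r}}$, and then Corollary~\ref{corvar} to lift to $2^{s_1}=2^s$.

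One small remark: your worry about a multiplicity $N$ is unnecessary. The paragraph preceding Proposition~\ref{nfp} stipulates that $p^{s_1},\dots,p^{s_r}$ are the invariant factors of $R$ as a $\mathbb{Z}$-module, so as abelian groups $R\cong\mathbb{Z}_{2^{s_1}}\times\cdots\times\mathbb{Z}_{2^{s_r}}$ and the map $(A_1,\dots,A_r)\mapsto g_1A_1+\cdots+g_rA_r$ is a \emph{bijection}, not merely a surjection. Thus $\sum_{A_i}(g_1A_1+\cdots+g_rA_r)^k$ equals $S_k^d(R)$ on the nose, and your extra paragraph can be dropped.
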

\begin{proof}
It is identical to the proof of Proposition \ref{nfp}.
\end{proof}

\section{Conjectures and further work}
\label{cfw}
Given a finite commutative ring $R$ of characteristic $n$, we have seen in the last section that $S_k^d(R)=0$ for many values of $k$, $d$ and $n$. In this section we present two conjectures based on strong computational evidence which, being true, would let us to give a general result about $S_k^d(R)$.

With the notation from the previous section, given an $r$-tuple of integers $\kappa=(k_1,\dots,k_r)$, we consider the set of monomials in the non-commuting variables $\{x_1,\dots, x_r\}$
$$ \Omega_{\kappa}:= \{w   : \textrm{deg}_{x_i}(w)=k_i,\ \textrm{for every $i$}\}.$$
The following conjectures are based on computational evidence.

\begin{con}\label{conj1}
With the previous notation, let $s_1\geq s_2 \geq \cdot\cdot\cdot \geq s_r$. Then
$$S_w^d(p^{s_1 },p^{s_2},\dots,p^{s_r})\equiv 0 \pmod{p^{s_1}},$$
unless $d=p=2$ and $s_i=1$ for all $i$.
\end{con}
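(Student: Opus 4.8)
\textbf{Proof proposal for Conjecture \ref{conj1}.}

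The plan is to reduce the statement about a general monomial $w\in\Omega_\kappa$ to the entrywise monomial sums already handled by Lemmas \ref{lemo} and \ref{lem2}, exactly as in Propositions \ref{exp>1}, \ref{prop2m1} and \ref{exp1}, but now keeping careful track of the fact that the summation variables live in different rings $M_{p^{s_i}}^d$. First I would write $A_l=\big(a_{i,j}^l\big)$ and observe that each entry of the matrix $S_w^d(p^{s_1},\dots,p^{s_r})$ is a sum of terms of the form $\prod (a_{i,j}^l)^{\beta_{i,j,l}}$, where the exponents attached to the block of variables coming from $A_l$ sum to $k_l=\deg_{x_l}(w)$; crucially, for $l\ge 2$ the variable $a_{i,j}^l$ ranges over $\{0,\dots,p^{s_l}-1\}$ with $s_l\le s_1$. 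I would then split the evaluation of each such product sum as an iterated sum, peeling off the variables ring by ring, and use the single-variable evaluations of \cite[Lemma 3]{GOS} together with Corollary \ref{corvar} to propagate a factor of $p^{s_1}$ (or $p^{d^2}$ times a lower power, which is enough) up to the full modulus $p^{s_1}$.

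The heart of the argument is a case analysis mirroring the earlier propositions. If $s_r>1$, then every $s_i>1$, and since $w$ is a monomial in $r>1$ variables, each entry polynomial involves at least $rd^2\ge 2d^2\ge 8$ variables; Remark \ref{m1} (for $p=2$) or Lemma \ref{lemo} together with Remark \ref{m1}'s odd-prime analogue shows that any product sum over $\ge 2$ variables in a ring $\mathbb{Z}_{p^s}$ with $s>1$ vanishes mod $p^s$, and then Corollary \ref{corvar} lifts this from $p^{s_r}$ up to $p^{s_1}$. If instead some $s_i=1$, I would isolate the variables in those rings of exponent $1$: by Lemma \ref{lemo} (odd $p$) a single-variable sum over $\mathbb{Z}_p$ vanishes unless the exponent is a positive multiple of $p-1$, and by Lemma \ref{lem2} ($p=2$) a sum over $\mathbb{Z}_2$ is $1$ when the exponent is positive and $0$ when it is $0$; combining with the "some $\beta_{i,j,l}=0$" observation and a degree count ($\sum\beta=k$ against the number of variables $rd^2$) forces vanishing except in a narrow numerical window. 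For $p$ odd this window is $k\ge rd^2(p-1)\ge 2d^2\cdot 2=4d^2$ with $k$ a multiple of $p-1$, and one still gets a factor $-p^{s-1}$ from every variable in a ring with $s\ge 2$, hence vanishing whenever at least one $s_i\ge 2$; the only surviving case is $s_1=\dots=s_r=1$ with $p$ odd, which is excluded by Proposition \ref{exp1} after checking that $k<rd^2(p-1)$ is forced — this is where I would need an auxiliary combinatorial estimate. For $p=2$ the surviving case is precisely $s_1=\dots=s_r=1$, and there the analysis of $S_w^d(2)$ over $\mathbb{Z}_2$ collapses (every product sum is $1$), so the only genuine exception is $d=2$ with all $s_i=1$, matching the statement.

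The main obstacle I anticipate is the $p=2$, all $s_i=1$ sub-case: over $\mathbb{Z}_2$ every entrywise product sum equals $1$, so $S_w^d(2)$ is the sum over all placements of variables of a fixed $0/1$ pattern, and one must show this reduces to a count that is $0\bmod 2$ as soon as $d\ge 3$ (or $d=2$ with the wrong $k$). I would handle this by the periodicity trick of Proposition \ref{nd2}: over $\mathbb{Z}_2$, $M^2=M^8$ for all $M\in M_d^d$ (more generally a bounded exponent holds for each block), so $S_w^d(2)$ depends on the degree vector $\kappa$ only through its reduction by a fixed period, leaving finitely many cases per $d$; combined with the $\mathbb{F}_2$ result of Theorem 2 (the $r=1$ case) and a bilinearity/symmetrization argument relating $S_w^d(2)$ for general $w$ to sums of the form $S_k^d(\mathbb{F}_2[x_1,\dots,x_r])$ over a free $\mathbb{F}_2$-algebra, one expects the vanishing for $d\ge 3$ to follow from Proposition \ref{exp1}(i) applied with $rd^2$ variables. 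Verifying that this symmetrization is tight enough to cover all $\kappa$ — rather than just generic ones — is the delicate point, and is presumably why the authors leave it as a conjecture rather than a theorem.
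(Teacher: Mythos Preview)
The statement you are addressing is labeled a \emph{conjecture} in the paper, and the paper offers no proof of it --- only the remark that it is ``based on computational evidence''. So there is no authors' proof to compare your proposal against; your sketch is an attempt to \emph{settle} an open problem posed in the paper, and it does not do so.

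Your outline is correct and useful precisely in the regimes the paper already controls. When $s_1\ge 2$ and $p$ is odd, your $p$-adic valuation count (each appearing variable in the $l$-th block contributes at least $p^{s_l-1}$, each non-appearing one contributes $p^{s_l}$) does give total valuation at least $d^2\sum_l s_l - rd^2 \ge d^2(s_1-1)\ge s_1$ for $d\ge 2$, so that part is fine. But the residual case $s_1=\cdots=s_r=1$ with $p$ odd is not handled: you write that it is ``excluded by Proposition~\ref{exp1} after checking that $k<rd^2(p-1)$ is forced'', yet nothing forces this --- the conjecture is meant to hold for every monomial $w$, in particular for $w$ of total degree $k$ arbitrarily large and a multiple of $p-1$, where Lemma~\ref{lemo} yields $(-1)^{rd^2}$ on every term rather than $0$. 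For $r=1$ this case is the Brawley--Carlitz--Levine theorem, whose proof uses the group structure of $\mathrm{GL}_d(\mathbb{F}_p)$ and rational canonical forms, not the elementary power-sum identities you invoke; for $r>1$ no analogue is supplied.

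The $p=2$ analysis has the same shape of gap, which you yourself flag: over $\mathbb{F}_2$ the entrywise sums are all $1$, so $S_w^d(2)\bmod 2$ is a pure parity count of monomial placements, and neither the periodicity reduction nor the vague ``symmetrization'' you propose pins that parity down for general $w$ and $d\ge 3$. Your concluding sentence (``presumably why the authors leave it as a conjecture'') is exactly right --- these two residual cases are the content of the conjecture, and your proposal does not resolve them.
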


\begin{con}\label{conj2}
If $p=2=d$ and $r>1$ then for every $\kappa \in \mathbb{N}^r$
$$\sum_{w \in \Omega_{\kappa}}   \sum_{A_i\in M_{2}^d} w(A_1,\dots,A_r) \equiv 0 \pmod{2}.$$
\end{con}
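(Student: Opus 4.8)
We sketch a possible line of attack. Put $X_w:=\sum_{A_1,\dots,A_r\in M_2^2}w(A_1,\dots,A_r)\in\mathbb{M}_2(\mathbb{Z}_2)$, so the assertion reads $\sum_{w\in\Omega_\kappa}X_w\equiv 0_2\pmod 2$. The plan is to shrink, by a chain of symmetry arguments, the set of degree tuples $\kappa=(k_1,\dots,k_r)$ one has to treat, until only a residual case survives. First, conjugation by $g\in GL_2(\mathbb{Z}_2)$ permutes $M_2^2$ and turns $w(A_1,\dots,A_r)$ into $g\,w(A_1,\dots,A_r)\,g^{-1}$; reindexing the sum gives $gX_wg^{-1}=X_w$, and since a direct computation shows that the only matrices commuting with all of $GL_2(\mathbb{Z}_2)$ are $0_2$ and $I_2$, each $X_w\in\{0_2,I_2\}$, so it suffices to prove that $\#\{w\in\Omega_\kappa:X_w=I_2\}$ is even. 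Second, if two of the $k_i$ agree, the transposition $\sigma$ of the corresponding variables permutes $\Omega_\kappa$ with $X_{\sigma\cdot w}=X_w$ and has no fixed point (it moves the first letter of $w$ lying in those two variables), so $\Omega_\kappa$ breaks into $\sigma$-orbits of size two and the sum is $0$; hence we may assume $k_1,\dots,k_r$ pairwise distinct.

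Next I would isolate the ``small'' and the ``asymmetric'' tuples. Expanding one entry of $w(A_1,\dots,A_r)$ as a sum over lattice walks on the vertex set $\{1,2\}$ in the $4r$ entries of the $A_i$ and then summing each entry over $\mathbb{Z}_2$, a walk contributes modulo $2$ only if, for every $i$, its steps coloured by $x_i$ realise all four of $(1,1),(1,2),(2,1),(2,2)$; in particular every variable must occur at least four times, so $X_w=0_2$ for all $w\in\Omega_\kappa$ as soon as some $k_i\le 3$, and we may assume $k_i\ge 4$ for all $i$. The reversal $w\mapsto w^{*}$ is then an involution of $\Omega_\kappa$ with $X_{w^{*}}=X_w^{\mathsf{T}}=X_w$ (the last equality since $X_w\in\{0_2,I_2\}$ is symmetric), so non-palindromic words cancel in pairs and $\sum_{w\in\Omega_\kappa}X_w\equiv\sum_{w=w^{*}}X_w\pmod 2$; as a palindrome has at most one letter of odd multiplicity, the sum vanishes whenever at least two $k_i$ are odd, and we may assume at most one $k_i$ is odd.

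For a palindromic $w$ I would run the walk expansion of $(X_w)_{11}$ once more: the reversal of a closed walk $1\to 1$ preserves the contribution condition, precisely because the colouring of $w$ is itself palindromic, so only walks that are palindromic as walks survive modulo $2$. In such a walk the step in position $j$ and the one in the mirror position carry the same colour and are each other's reverses (and, when $\deg w$ is odd, the central step is a loop), and a short analysis of when a colour can still realise all four steps shows this is impossible unless every even $k_i$ is at least $6$ and the (at most one) odd $k_i$ is at least $5$. Below those explicit thresholds $X_w=0_2$ for every palindromic $w\in\Omega_\kappa$, and the sum is again $0$.

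What remains is the case $r\ge 2$ with $k_1,\dots,k_r$ pairwise distinct, at most one of them odd, every even one $\ge 6$ and the odd one (if any) $\ge 5$; here one must still show that the number of palindromic $w\in\Omega_\kappa$ with $X_w=I_2$ is even, and I expect this to be the genuine difficulty, where the elementary cancellations are exhausted. Two routes seem natural. The first is to close the descent implicit in the previous paragraph: $X_w$ for a palindrome is determined by its half-word, and $\sum_{w=w^{*}}(X_w)_{11}$ rewrites as a count, modulo $2$, of pairs (palindromic colouring of the half-word, lattice walk) subject to a \emph{symmetrised} covering condition --- structurally a copy of the original problem for the halved degree tuple --- so one would look for a fixed-point-free involution on the surviving pairs, or iterate until the residual data is too small to meet the covering condition. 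The second is to recast the whole statement as the single identity
\[
\sum_{B_1,\dots,B_r\in M_2^2}\bigl(t_1B_1+\dots+t_rB_r\bigr)^{N}=0_2\quad\text{in }\mathbb{M}_2(\mathbb{Z}_2[t_1,\dots,t_r])
\]
for every $r\ge 2$ and $N\ge 0$ --- which one checks is equivalent to the conjecture, the pure power coefficients $t_i^{N}$ vanishing automatically because a variable that fails to occur contributes a factor $|M_2^2|\equiv 0\pmod 2$ --- and to attack it by a transfer-matrix computation over $\mathbb{Z}_2$ exploiting the relations $M^{2}=M^{8}$ available in $M_2^2$ (cf.\ Proposition \ref{nd2}). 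In both approaches the core obstacle is the same: computing $X_w$ for palindromic $w$ once the thresholds are met.
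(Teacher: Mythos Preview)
This statement is labelled \emph{Conjecture~2} in the paper and is presented there as an open problem supported by computational evidence; the paper gives no proof of it, so there is no argument in the paper to compare your proposal against.

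Your reductions are, as far as they go, correct: the conjugation argument does force each $X_w\in\{0_2,I_2\}$; the variable-swap involution when $k_i=k_j\ (\ge 1)$ is fixed-point-free and pairs terms with equal $X_w$; the walk expansion and the criterion ``every entry of every $A_i$ must occur'' do force $k_i\ge 4$; the reversal $w\mapsto w^{*}$ gives $X_{w^{*}}=X_w^{\mathsf T}=X_w$ and reduces to palindromic $w$ (hence at most one odd $k_i$); and for palindromic $w$ and palindromic closed walks your mirror-pair analysis correctly pushes the thresholds to $k_i\ge 6$ for the even degrees and $k_i\ge 5$ for the single odd one.

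But you yourself stop at the residual case (pairwise distinct $k_i$, at most one odd, every even $k_i\ge 6$, the odd one $\ge 5$) and only sketch two possible approaches without carrying either out. That residual case is precisely the open content of the conjecture: once the obvious symmetries (variable permutation, reversal of words, reversal of walks) have been used, no further fixed-point-free involution is apparent, and neither your half-word descent nor the transfer-matrix reformulation is shown to close the gap. In short, what you have is a sound sequence of reductions that narrows the problem, not a proof; the core difficulty remains untouched, which is consistent with the paper leaving this as a conjecture.
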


The next lemma extends Lemma \ref{red} in some sense. Its proof is straightforward.

\begin{lem} \label{lemg}
Let $R_1$ and $R_2$ be finite commutative rings, and let $R=R_1\times R_2$ be its direct product. Then
$$S_k^d(R)= (\textrm{card} (R_2)^{d^2} \cdot S_k^d( R_1),\textrm{card} (R_1)^{d^2} \cdot S_k^d( R_2))\in \mathbb{M}_d(R_1)\times \mathbb{M}_d(R_2)$$
\end{lem}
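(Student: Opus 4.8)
The plan is to prove Lemma \ref{lemg} by a direct index-shuffling argument. Observe first that the ring isomorphism $\mathbb{M}_d(R_1\times R_2)\cong \mathbb{M}_d(R_1)\times\mathbb{M}_d(R_2)$ is induced entrywise: a matrix $M\in\mathbb{M}_d(R)$ corresponds to a pair $(M_1,M_2)$ with $M_i\in\mathbb{M}_d(R_i)$, and this correspondence respects multiplication, hence $M^k\leftrightarrow(M_1^k,M_2^k)$. Under this identification the full matrix ring $\mathbb{M}_d(R)$ is precisely the Cartesian product $\mathbb{M}_d(R_1)\times\mathbb{M}_d(R_2)$, so every $M$ is $(M_1,M_2)$ for a \emph{unique} pair, each ranging independently over the respective matrix ring.

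The key step is then to split the sum accordingly:
\begin{align*}
S_k^d(R) &= \sum_{M\in\mathbb{M}_d(R)} M^k = \sum_{M_1\in\mathbb{M}_d(R_1)}\ \sum_{M_2\in\mathbb{M}_d(R_2)} (M_1^k,M_2^k)\\
&= \Bigl(\ \sum_{M_1\in\mathbb{M}_d(R_1)}\sum_{M_2\in\mathbb{M}_d(R_2)} M_1^k\ ,\ \sum_{M_1\in\mathbb{M}_d(R_1)}\sum_{M_2\in\mathbb{M}_d(R_2)} M_2^k\ \Bigr).
\end{align*}
In the first component the inner summand $M_1^k$ does not depend on $M_2$, so summing over $M_2$ simply multiplies by $\textrm{card}(\mathbb{M}_d(R_2))=\textrm{card}(R_2)^{d^2}$, yielding $\textrm{card}(R_2)^{d^2}\cdot S_k^d(R_1)$. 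Symmetrically the second component equals $\textrm{card}(R_1)^{d^2}\cdot S_k^d(R_2)$, which is exactly the claimed formula.

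There is no real obstacle here; the only point requiring a word of care is the legitimacy of interchanging the componentwise pairing with the (finite) sum in $\mathbb{M}_d(R_1)\times\mathbb{M}_d(R_2)$, which holds because addition in a direct product of rings is defined componentwise. This is why the statement is flagged as straightforward. Note also that taking $R_2=\mathbb{Z}/m\mathbb{Z}$ with $m\mid n$ and $R_1=\mathbb{Z}/(n/m)\mathbb{Z}$ (when $\gcd(m,n/m)=1$) recovers the content of Lemma \ref{red} in that split case, as advertised; the lemma above is the version valid for an arbitrary direct-product decomposition rather than merely for quotients of $\mathbb{Z}/n\mathbb{Z}$.
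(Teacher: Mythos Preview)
Your proof is correct and is precisely the straightforward argument the paper has in mind; the paper does not give an explicit proof at all, merely remarking that ``its proof is straightforward.'' Your componentwise splitting via the ring isomorphism $\mathbb{M}_d(R_1\times R_2)\cong\mathbb{M}_d(R_1)\times\mathbb{M}_d(R_2)$ and the resulting count $\textrm{card}(\mathbb{M}_d(R_i))=\textrm{card}(R_i)^{d^2}$ is exactly the intended justification.
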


Now, the following proposition would follow from Conjectures \ref{conj1} and \ref{conj2}.

\begin{prop} \label{genp}
Let $R$ be a finite commutative ring of characteristisc $p^s$ for some prime $p$. Then $S_k^d(R)=0$ unless $d=2$, $R=\mathbb{Z}/2\mathbb{Z}$ and $1<k\equiv -1,0,1\pmod{6}$. Moreover, in this case $S_k^d(R)=I_2$.
\end{prop}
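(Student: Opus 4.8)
The plan is to reduce Proposition \ref{genp} to the two conjectures and the structural results already established, by splitting the argument according to the characteristic. First I would dispose of the odd characteristic case: if $p$ is odd, then a finite commutative ring $R$ of characteristic $p^s$ is either free or non-free as a $\mathbb{Z}_{p^s}$-module, and in both cases Propositions \ref{free} and \ref{nfp} give $S_k^d(R)=0$ \emph{provided} $k$ is large enough and a multiple of $p-1$. The remaining exponents — those not covered — would be handled exactly by Conjecture \ref{conj1}: writing $R$ through a minimal generating set $g_1,\dots,g_r$ with invariant factors $p^{s_1}\geq\cdots\geq p^{s_r}$, we have $S_k^d(R)=\sum_{A_i\in M_{p^{s_i}}^d}(g_1A_1+\cdots+g_rA_r)^k$, whose entries are polynomials in the $g_i$ with coefficients $S_w^d(p^{s_1},\dots,p^{s_r})$ for the various monomials $w$ of degree $k$; Conjecture \ref{conj1} says each such coefficient vanishes mod $p^{s_1}=p^s$ (the exceptional case $d=p=2$ not arising since $p$ is odd). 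Hence $S_k^d(R)=0$ for all $k,d\geq 2$ in odd characteristic.

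Next I would treat characteristic $2^s$ with $s>1$. Again $R$ is free or non-free over $\mathbb{Z}_{2^s}$; Propositions \ref{free2} and \ref{nf2} already kill the exponents with $k\leq rd^2$ or $k+rd^2$ even, and the remaining exponents are covered by Conjecture \ref{conj1} applied to the invariant-factor decomposition, whose largest exponent is $2^{s}$ with $s>1$, so the exceptional clause $s_i=1$ for all $i$ again does not apply. Thus $S_k^d(R)=0$ whenever $\mathrm{char}(R)=2^s$ with $s\geq 2$.

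The genuinely delicate case is characteristic $2$, where $R$ is automatically a free $\mathbb{Z}/2\mathbb{Z}$-module of some rank $r$, i.e. an $\mathbb{F}_2$-algebra of dimension $r$. If $r=1$ then $R=\mathbb{Z}/2\mathbb{Z}$ and the desired conclusion is precisely Proposition \ref{nd2} for $d=2$ together with Proposition \ref{p} (and Corollary \ref{pp}) for $d>2$: $S_k^d(\mathbb{Z}/2\mathbb{Z})=I_2$ exactly when $d=2$ and $1<k\equiv -1,0,1\pmod 6$, and $0$ otherwise. If $r>1$, I would fix an $\mathbb{F}_2$-basis $g_1,\dots,g_r$, so that $S_k^d(R)=\sum_{A_i\in M_2^d}(g_1A_1+\cdots+g_rA_r)^k$, and expand the $k$-th power: the coefficient of a monomial $g_{i_1}\cdots g_{i_k}$ regrouped by its multidegree $\kappa$ is $\sum_{w\in\Omega_\kappa}\sum_{A_i\in M_2^d}w(A_1,\dots,A_r)$, which Conjecture \ref{conj2} asserts is $\equiv 0\pmod 2$ whenever $d=2$ (and for $d>2$ one expects the same, reducing as in Proposition \ref{p}). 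Since each $\kappa$ with at least two nonzero entries contributes $0$, only the ``pure'' multidegrees $\kappa=(0,\dots,k,\dots,0)$ survive, and the corresponding sum is $g_i^k\cdot S_k^d(\mathbb{Z}/2\mathbb{Z})$; summing over $i$ and using $S_k^2(\mathbb{Z}/2\mathbb{Z})=I_2$ in the exceptional range gives $S_k^2(R)=\big(\sum_{i=1}^r g_i^k\big)I_2$. One must then argue that $\sum_{i=1}^r g_i^k$, which equals the trace-like quantity $S_k^1(R)$, is $0$ in $R$ unless $R$ has a unique nonzero square-zero element $e$ which is idempotent — but \textbf{this is not what Proposition \ref{genp} claims}; the proposition as stated only asserts the conclusion for $R=\mathbb{Z}/2\mathbb{Z}$, so in fact for $r>1$ the claim is simply that $S_k^d(R)=0$, which forces us to also invoke the (conjectural) vanishing of $S_k^1(R)=\sum_i g_i^k$ in characteristic $2$ for $r>1$; I would therefore point out that the full strength of Conjecture \ref{conjf} is needed here and Proposition \ref{genp} should be read as the consequence of Conjectures \ref{conj1}, \ref{conj2} in the stated generality. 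The main obstacle is thus the characteristic-$2$, $r>1$ subcase: everything hinges on Conjecture \ref{conj2} collapsing all mixed-multidegree contributions, after which the residual one-variable sum must be controlled, and it is precisely here that the non-commutative combinatorics of monomials over $M_2(\mathbb{F}_2)$ resists a clean inductive treatment.
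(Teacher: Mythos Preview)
Your overall approach---expand $S_k^d(R)$ through a minimal generating set and invoke Conjectures~\ref{conj1} and~\ref{conj2}---is exactly the paper's. The odd-characteristic and $\mathrm{char}=2^s$ with $s>1$ cases are fine (though your detour through Propositions~\ref{free}--\ref{nf2} is unnecessary once you grant Conjecture~\ref{conj1}).

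The genuine error is in the characteristic-$2$, rank $r>1$ case. When you write
\[
S_k^d(R)=\sum_{A_1,\dots,A_r\in M_2^d}(g_1A_1+\cdots+g_rA_r)^k
\]
and group by multidegree $\kappa$, the coefficient attached to $g_1^{k_1}\cdots g_r^{k_r}$ is $\sum_{w\in\Omega_\kappa}\sum_{A_1,\dots,A_r\in M_2^d}w(A_1,\dots,A_r)$, where the inner sum runs over \emph{all} $r$ matrix variables, whether or not they occur in $w$. For a pure multidegree $\kappa=(0,\dots,k,\dots,0)$ with the $k$ in position $i$, this sum is therefore not $S_k^d(\mathbb{F}_2)$ but $(2^{d^2})^{r-1}\,S_k^d(\mathbb{F}_2)$, which vanishes in characteristic $2$ because $r>1$. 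So the pure multidegrees die automatically; there is no residual quantity $\sum_{i}g_i^k$ to control, and Conjectures~\ref{conj1} and~\ref{conj2} alone suffice. (Indeed this is why Conjecture~\ref{conj2} is stated for \emph{every} $\kappa\in\mathbb{N}^r$: the pure ones are trivial for exactly this reason.)

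With that correction the only case not killed is $d=p=2$, $s=1$, $r=1$, i.e.\ $R=\{0,g_1\}$. The paper then simply observes that either $g_1^2=g_1$, whence $R=\mathbb{Z}/2\mathbb{Z}$ and Proposition~\ref{nd2} gives the claimed value, or $g_1^2=0$, whence $g_1^k=0$ for $k>1$ and $S_k^2(R)=0$. Your final assertion that Proposition~\ref{genp} needs the full strength of Conjecture~\ref{conjf} is thus unfounded.
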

\begin{proof}
Assume that $\langle g_1\dots, g_r\rangle$ is a minimal set of generators of $R$ as $\mathbb{Z}_{p^s}$-module. Let $s=s_1\geq s_2 \geq \dots \geq s_r$ be integers such that the order of $g_i$ is $p^{s_i}$; i.e., $s_1,\dots, s_r$ are minimal such that $p^{s_i}g_i=0$.

In this situation,
$$ S_k^d(R)=\sum_{A_i \in M_{p^{s_i}}^d}(g_1 A_1 +...+g_r  A_r)^k = 0, $$
unless $d=p=2$, $s=r=1$ and $1<k\equiv -1,0,1 \pmod{6}$ due to Conjecture \ref{conj1}.

On the other hand, if $d=p=2$, $s=r=1$ and $1<k\equiv -1,0,1 \pmod{6}$ it follows that
$$S_k^2(R)=\sum_{A \in M_{2}^2}(g_1  A)^k = \begin{pmatrix} g_1^k & 0 \\ 0 &  g_1^k \end{pmatrix}  .$$
But since in this case $R=\{0,g_1\}$, there are only two possibilities: $g_1^2=g_1$ (and hence $R=\mathbb{Z}/2\mathbb{Z}$) or $g_1^2=0$ and the result follows.
\end{proof}

Finally, the next general result holds provided Conjectures \ref{conj1} and \ref{conj2} are correct. It is Conjecture \ref{conjf}, as stated in the introduction to the paper.

\begin{theorem} \label{teorfin}
Let $d>1$ and let $R$ be a finite commutative ring. Then $S_k^d(R)=0$ unless the following conditions hold:
\begin{enumerate}
\item $d=2$,
\item ${\rm card}(R) \equiv 2 \pmod{4}$ and $1<k\equiv -1,0,1 \pmod{6}$,
\item The unique element $e\in R \setminus \{0\}$ such that $2e =0$ is idempotent.
\end{enumerate}
Moreover, in this case
$$ S_k^d(R)=\begin{pmatrix} e & 0 \\ 0 & e \end{pmatrix}.$$
\end{theorem}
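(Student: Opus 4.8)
The plan is to deduce the result from Proposition \ref{genp} by splitting $R$ according to its characteristic. Write $\textrm{char}(R)=n=p_1^{s_1}\cdots p_t^{s_t}$ with the $p_i$ distinct primes; as recalled at the beginning of Section \ref{fcr} this gives $R\cong R_1\times\cdots\times R_t$ with $\textrm{char}(R_i)=p_i^{s_i}$, so in particular each $R_i$ is additively a $p_i$-group. Iterating Lemma \ref{lemg} I would then obtain
$$S_k^d(R)=\bigl(c_1\,S_k^d(R_1),\dots,c_t\,S_k^d(R_t)\bigr)\in\prod_{i=1}^{t}\mathbb{M}_d(R_i),\qquad c_i=\Bigl(\prod_{j\neq i}\textrm{card}(R_j)\Bigr)^{d^2}.$$

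Next I would apply Proposition \ref{genp} to each factor: $S_k^d(R_i)=0$ unless $d=2$, $R_i\cong\mathbb{Z}/2\mathbb{Z}$ and $1<k\equiv-1,0,1\pmod 6$, in which case $S_k^d(R_i)=I_2$. Since the primes $p_i$ are pairwise distinct, at most one of them equals $2$, hence at most one factor $R_i$ can be isomorphic to $\mathbb{Z}/2\mathbb{Z}$; moreover all the other factors then have odd cardinality. Consequently $S_k^d(R)=0$ whenever $d\neq 2$, or $k\not\equiv-1,0,1\pmod 6$, or $k=1$, or no factor $R_i$ is isomorphic to $\mathbb{Z}/2\mathbb{Z}$.

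It remains to identify the surviving case and compute the sum there; this part is mostly bookkeeping. First I would observe that some $R_i\cong\mathbb{Z}/2\mathbb{Z}$ if and only if $2\mid\textrm{card}(R)$ but $4\nmid\textrm{card}(R)$, i.e. $\textrm{card}(R)\equiv 2\pmod 4$: indeed $2\mid\textrm{card}(R)$ forces $2\mid n$ (a prime divides the order of a finite abelian group iff it divides its exponent), so the factor $R_1$ of $2$-power characteristic exists, and $4\nmid\textrm{card}(R)$ then forces $\textrm{card}(R_1)=2$, whence $R_1\cong\mathbb{Z}/2\mathbb{Z}$. Thus conditions (1) and (2) describe exactly the surviving case. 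Writing $R\cong\mathbb{Z}/2\mathbb{Z}\times R'$ with $R'$ of odd order, the additive $2$-torsion of $R$ is $\{(0,0),(1,0)\}$, so the unique $e\in R\setminus\{0\}$ with $2e=0$ is $e=(1,0)$, which satisfies $e^2=e$; hence condition (3) holds automatically. Finally, Lemma \ref{lemg} gives $S_k^2(R)=\bigl(\textrm{card}(R')^{4}\,I_2,\,0\bigr)$, and since $\textrm{card}(R')$ is odd we have $\textrm{card}(R')^{4}I_2=I_2$ in $\mathbb{M}_2(\mathbb{Z}/2\mathbb{Z})$; under the isomorphism $R\cong\mathbb{Z}/2\mathbb{Z}\times R'$ this is precisely $\textrm{diag}(e,e)$, completing the proof.

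As for the difficulties: essentially all of the substance has already been absorbed into Proposition \ref{genp} (and, through it, into Conjectures \ref{conj1} and \ref{conj2}), so the argument above is elementary. The only point demanding a little care is the passage from the extrinsic componentwise formula $S_k^d(R)=(c_1S_k^d(R_1),\dots,c_tS_k^d(R_t))$ to the intrinsic conditions on $R$: one must check that the integer multiplier $c_i$ attached to the $\mathbb{Z}/2\mathbb{Z}$-factor is odd, so that it acts as the identity on $\mathbb{M}_2(\mathbb{Z}/2\mathbb{Z})$, and that the distinguished idempotent $e$ of condition (3) is exactly the image of $(1,0)$ under the characteristic decomposition.
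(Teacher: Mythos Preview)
Your approach is essentially the paper's own: decompose $R$ along its characteristic, apply Lemma \ref{lemg} componentwise, and invoke Proposition \ref{genp} on each prime-power-characteristic factor. That skeleton is fine.

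There is, however, one genuine slip in the bookkeeping. From $\textrm{card}(R_1)=2$ you conclude ``whence $R_1\cong\mathbb{Z}/2\mathbb{Z}$'' and then that condition (3) ``holds automatically''. This is not valid in the paper's setting, which allows non-unital rings (see the remark following the theorem). A ring of cardinality $2$ can have zero multiplication, in which case its nonzero element $g$ satisfies $g^2=0\neq g$; then $e=(g,0)$ is \emph{not} idempotent, and Proposition \ref{genp} gives $S_k^d(R_1)=0$ in that case. So condition (3) is not a free consequence of $\textrm{card}(R)\equiv 2\pmod 4$; it is precisely what distinguishes $R_1\cong\mathbb{Z}/2\mathbb{Z}$ from the zero ring on two elements. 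The correct biconditional is: some factor $R_i\cong\mathbb{Z}/2\mathbb{Z}$ if and only if $\textrm{card}(R)\equiv 2\pmod 4$ \emph{and} the unique $2$-torsion element $e$ is idempotent. Once you insert this, the rest of your argument goes through unchanged and matches the paper's proof.
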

\begin{proof}
First, observe that if $\hbox{card}(R)\equiv 2\pmod{4},$ then $R$ has $2m$ elements, where $m$ is odd. Therefore, the $2-$primary component of the additive group $R$ has only two elements, and so there is a unique element $e\in R$ of additive order $2$.

Now, if $R$ is of characteristic $p^s$ for some prime, the result follows from the above proposition. Hence, we assume that $R$ has composite characteristic.
Let $R=R_1\times R_2$ with $R_1$ the zero ring or $\textrm{char}(R_1)=2^s$ and $\textrm{char}(R_2)$ odd. Due to Lemma \ref{lemg} and Proposition \ref{genp} it follows that $ S_k^d(R)=(\textrm{card}(R_2)^{d^2}\cdot S_k^d(R_1),0)$.

Now, $S_k^d(R_1)=0$ unless $d=2=p$, $R_1=\mathbb{Z}/2\mathbb{Z}$ and $1<k\equiv -1,0,1 \pmod 6$ in which case
$$ S_k^d(R)=\left( \begin{pmatrix} 1 & 0 \\ 0 &  1 \end{pmatrix}, \begin{pmatrix} 0 & 0 \\ 0 &  0 \end{pmatrix}\right)= \begin{pmatrix} e & 0 \\ 0 &  e \end{pmatrix},$$
where $e=(1,0) \in R_1\times R_2 $ is the only  idempotent of $R$ such that $2e =0$.
\end{proof}

\begin{rem}
Note that if, in addition, $R$ is unital then the element $e$ from the previous theorem is just $e=\frac{\textrm{card}(R)}{2}\cdot 1_R$. Also note that if $S_k^d(R)\neq 0$, then $ R \cong \mathbb{Z}/2\mathbb{Z}\times R_2$ with $\textrm{card}(R_2)$ odd or $R_2=\{0\}$.
\end{rem}

We close the paper with a final conjecture.

\begin{con}
Theorem \ref{teorfin} remains true if $R$ is non-commutative.
\end{con}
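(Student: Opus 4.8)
The plan is to rerun the proofs of Proposition~\ref{genp} and Theorem~\ref{teorfin}, tracking the single point at which commutativity of $R$ intervenes, and then to isolate what must be supplied there.

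\emph{Reductions.} If $\textrm{char}(R)=p_1^{s_1}\cdots p_t^{s_t}$, the additive primary components $R_i=\{x\in R:p_i^{s_i}x=0\}$ are two-sided ideals with $R_iR_j=0$ for $i\neq j$ and $R=\bigoplus_i R_i$ additively, so $R\cong R_1\times\cdots\times R_t$ with $\textrm{char}(R_i)=p_i^{s_i}$; Lemma~\ref{lemg} is purely formal. As for Theorem~\ref{teorfin}, it thus suffices to treat $\textrm{char}(R)=p^s$ and reassemble. Choosing a minimal $\mathbb{Z}_{p^s}$-generating set $g_1,\dots,g_r$ of $R$ with $p^{s_i}g_i=0$, $s=s_1\geq\cdots\geq s_r$, and using that integer matrix entries are central in $\mathbb{M}_d(R)$, one has $(g_iA_i)(g_jA_j)=(g_ig_j)(A_iA_j)$ for integer matrices, hence
\[
S_k^d(R)=\sum_{A_i\in M_{p^{s_i}}^d}(g_1A_1+\cdots+g_rA_r)^k=\sum_{w}g_w\Bigl(\prod_{j\notin J(w)}p^{\,s_jd^2}\Bigr)S_w^d\bigl((p^{s_i})_{i\in J(w)}\bigr),
\]
summed over words $w$ of length $k$ in $r$ letters, with $g_w=g_{w_1}\cdots g_{w_k}$ and $J(w)$ the set of letters of $w$. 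The matrix sums here do not involve $R$, so Conjecture~\ref{conj1} applies verbatim: since $d\geq2$, each summand is $\equiv0\pmod{p^s}$ --- directly if a letter realising $s_1=s$ appears in $w$, via the factor $p^{\,sd^2}$ otherwise --- unless $d=p=2$ and all $s_i=1$. Hence $S_k^d(R)=0$ except possibly when $d=2$ and $\textrm{char}(R)=2$.

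\emph{The binary case.} Here $R$ is a finite (possibly non-unital, non-commutative) $\mathbb{F}_2$-algebra, $r=\dim_{\mathbb{F}_2}R$, and $g_1,\dots,g_r$ is an $\mathbb{F}_2$-basis. Writing $M\in\mathbb{M}_2(R)$ uniquely as $M=\sum_l B_l\otimes g_l$ with $B_l\in\mathbb{M}_2(\mathbb{F}_2)$ and summing over all $(B_1,\dots,B_r)$,
\[
S_k^2(R)=\sum_{(l_1,\dots,l_k)}\Bigl(\sum_{B_1,\dots,B_r\in\mathbb{M}_2(\mathbb{F}_2)}B_{l_1}\cdots B_{l_k}\Bigr)\otimes(g_{l_1}\cdots g_{l_k}).
\]
Each inner sum commutes with $GL_2(\mathbb{F}_2)$ (simultaneous conjugation merely permutes the $B_i$), hence equals $c_l\,I_2$ for a scalar $c_l\in\mathbb{F}_2$ depending only on the word $l$, never on $R$. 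So $S_k^2(R)=I_2\otimes F_k(g_1,\dots,g_r)$ with $F_k:=\sum_l c_l\,X_{l_1}\cdots X_{l_k}\in\mathbb{F}_2\langle X_1,\dots,X_r\rangle$ a universal polynomial. If $r=1$ then $F_k=c_kX_1^k$ with $c_k=1$ exactly when $1<k\equiv0,1,5\pmod6$ (Proposition~\ref{nd2}); together with Lemma~\ref{lemg} in the reassembly step this produces exactly the exceptional value $\textrm{diag}(e,e)$ of the statement (with $R\cong\mathbb{Z}/2\mathbb{Z}\times R_2$ and $e=(1,0)$), \emph{provided} $F_k=0$ for every $r\geq2$. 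Testing against $R_0=\mathbb{F}_2\langle X_1,\dots,X_r\rangle/(\text{monomials of length}>k)$ --- where the coefficient of a length-$k$ monomial $X_{i_1}\cdots X_{i_k}$ in $S_k^2(R_0)$ is precisely $c_{(i_1,\dots,i_k)}$ --- shows that $S_k^2(R)=0$ for all finite $\mathbb{F}_2$-algebras $R$ with $\dim_{\mathbb{F}_2}R\geq2$ is \emph{equivalent} to $c_l=0$ for every length-$k$ word $l$ in $r\geq2$ letters.

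\emph{The obstacle.} Modulo Conjecture~\ref{conj1}, the discussion above reduces the non-commutative statement to the following word-by-word refinement of Conjecture~\ref{conj2}: \emph{for $r\geq2$ and every length-$k$ word $l$ in $r$ letters, $\sum_{B_i\in\mathbb{M}_2(\mathbb{F}_2)}B_{l_1}\cdots B_{l_k}=0$} (summing over a commuting family collapses words by multidegree and recovers Conjecture~\ref{conj2} itself). The part where some letter of $l$ occurs at most three times is easy: summing over that letter's matrix first and using multilinearity in it reduces the claim to $\sum_BB=0$, $\sum_BBCB=0$, $\sum_BBC_1BC_2B=0$ in $\mathbb{M}_2(\mathbb{F}_2)$, each holding because on matrix units these become sums over $\mathbb{M}_2(\mathbb{F}_2)$ of products of at most three entries of a single matrix, all $\equiv0\pmod2$. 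The genuine difficulty --- where I expect the real work to lie --- is the regime in which every letter of $l$ occurs at least four times (forcing $k\geq4r$): there even $\sum_BBC_1BC_2BC_3B$ need not vanish, and one must exploit cancellation across the whole word, using relations such as $B^8=B^2$ in $\mathbb{M}_2(\mathbb{F}_2)$, the reduction of cyclic-type words to power sums $\sum_CC^j$ over $\mathbb{M}_2(\mathbb{F}_2)$ (as in Proposition~\ref{nd2}), and the rank $\leq1$ Cayley--Hamilton identity $N^2=(\textrm{tr}\,N)N$ --- for instance $l=(X_1X_2)^m$ reduces to $\sum_{B_1,B_2}(B_1B_2)^m$, which vanishes after splitting off invertible $B_1$ and treating the singular part separately. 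Once this combinatorial fact is established uniformly, the non-commutative statement follows with no further ring-theoretic input, $F_k$ being universal.
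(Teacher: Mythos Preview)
The paper offers no proof of this statement: it is the closing conjecture, stated bare. So there is nothing to compare against on the paper's side, and the relevant question is whether your proposal actually proves the conjecture. It does not, and you say so yourself.

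What you have written is a reduction, and as such it is sound. The decomposition into primary components, the appeal to Conjecture~\ref{conj1} for the integer-matrix sums $S_w^d(p^{s_1},\dots,p^{s_r})$, the observation that integer entries are central so that $(g_iA_i)(g_jA_j)=(g_ig_j)(A_iA_j)$, and the $GL_2(\mathbb{F}_2)$-invariance argument forcing each inner sum to be a scalar $c_l I_2$ are all correct. Your test against the truncated free $\mathbb{F}_2$-algebra is the right way to see that the non-commutative statement is \emph{equivalent} (modulo Conjecture~\ref{conj1}) to the word-by-word vanishing
\[
\sum_{B_1,\dots,B_r\in\mathbb{M}_2(\mathbb{F}_2)} B_{l_1}\cdots B_{l_k}=0\qquad\text{for every word $l$ of length $k$ in $r\geq 2$ letters,}
\]
which is a genuine strengthening of the paper's Conjecture~\ref{conj2} (there one only sums over all words of a fixed multidegree). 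Your handling of the case where some letter occurs at most three times is also correct, since $\sum_{B}B_{i_1j_1}\cdots B_{i_mj_m}\equiv 0\pmod 2$ whenever $m\leq 3$.

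The gap is exactly the one you flag under ``The obstacle'': once every letter occurs at least four times, the monomial-by-monomial argument breaks (e.g.\ $\sum_B B_{11}B_{12}B_{21}B_{22}=1$ in $\mathbb{F}_2$), and you provide no mechanism to force the required cancellations. The hints you list --- $B^8=B^2$, Cayley--Hamilton for rank~$\leq 1$ matrices, the special case $l=(X_1X_2)^m$ --- are suggestive but do not add up to an argument for a general word. Since this combinatorial statement is precisely what separates the non-commutative conjecture from the commutative Theorem~\ref{teorfin} (itself already conditional on Conjectures~\ref{conj1} and~\ref{conj2}), what you have is a clean reformulation of the open problem, not a proof of it.
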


\end{document}